\documentclass[11pt,reqno]{amsart}

\usepackage[utf8]{inputenc}
\usepackage[T1]{fontenc}
\usepackage{amsmath,amssymb,amsthm,amsfonts}
\usepackage{mathtools}
\usepackage{geometry}
\usepackage{enumitem}
\usepackage{microtype}
\usepackage{xcolor}
\usepackage[colorlinks=true,linkcolor=blue,citecolor=red,urlcolor=teal]{hyperref}
\newtheorem{theorem}{Theorem}[section]
\newtheorem{proposition}[theorem]{Proposition}
\newtheorem{lemma}[theorem]{Lemma}
\newtheorem{corollary}[theorem]{Corollary}
\theoremstyle{definition}
\newtheorem{definition}[theorem]{Definition}

\newcommand{\N}{\mathbb{N}}
\newcommand{\dist}{\mathrm d}
\newcommand{\rhok}{\rho_{\kappa}}
\newcommand{\bdry}{\partial_\kappa}
\newcommand{\Hdim}{\dim_{\mathrm H}}
\newcommand{\ldimA}{\underline{\dim}_{\mathrm A}}
\newcommand{\diam}{\operatorname{diam}}
\newcommand{\id}{\operatorname{id}}

\title[Uniform Perfectness and Centers]
{Uniform Perfectness and Centers in Sublinearly Morse Boundaries}
\author{Hyungryul Baik}
\address{Department of Mathematical Sciences, KAIST, 291 Daehak-ro, Yuseong-gu, Daejeon 34141, Republic of Korea}
\email{hrbaik@kaist.ac.kr}
\date{Version 17, June 2026}
\subjclass[2020]{20F65, 20F67, 30L05, 54E35}
\keywords{Sublinearly Morse boundary, uniform perfectness, center exhaustivity, radial accessibility, metric tree, quasisymmetry, Hausdorff dimension}

\begin{document}

\begin{abstract}
The $\kappa$--Morse boundary was introduced for CAT(0) spaces by Qing and
Rafi and extended to proper geodesic spaces by Qing, Rafi, and Tiozzo.
Motivated by Han and Liu's work on uniformly perfect Morse boundaries, we ask
when uniform perfectness of visual boundary data detects $\kappa$--center
exhaustivity.  Two locally finite trees show that fixed-basepoint uniform
perfectness is insufficient and, when $\kappa$ is unbounded, a
basepoint-independent absolute annular cutoff is not necessary.  For locally
finite trees, center exhaustivity is equivalent to fixed-basepoint uniform
perfectness and $\kappa$--radial accessibility.  Under explicit uniform
visual-data hypotheses, analogous conditions imply center exhaustivity
through a chosen boundary stratum in proper geodesic spaces; a normalized
all-basepoint criterion is also obtained.  Finally, we characterize the
metric transforms $\phi$ for which one distortion function, depending only on
$\phi$, works for every identity $(Z,d)\to(Z,\phi\circ d)$, and analyze the
resulting metrics on rooted $q$--ary trees.
\end{abstract}

\maketitle

%=============================================================================
\section{Introduction}

Visual metrics on Gromov boundaries are a basic link between large-scale and
boundary metric geometry; see, for example, \cite{BonkSchramm00}.
Contracting and Morse boundaries extend this viewpoint beyond globally
hyperbolic spaces.  Charney and Sultan introduced the contracting boundary of
a CAT(0) space \cite{CharneySultan15}, while Cordes introduced the Morse
boundary of a proper geodesic space \cite{Cordes17}; the latter paper has a
subsequent corrigendum \cite{CordesCorr24}.  Related developments include the
connection with stable subsets \cite{CordesHume17}, a metrizable topology on
the contracting boundary \cite{CashenMackay19}, and quasi-M\"obius maps of
Morse boundaries \cite{CharneyCordesMurray19}.

Qing and Rafi introduced the $\kappa$--Morse boundary for CAT(0) spaces and
proved that it is quasi-isometry invariant and metrizable \cite{QRT22}.
Qing, Rafi, and Tiozzo extended the construction to arbitrary proper geodesic
spaces \cite{QRT24}.  Related work includes characterizations of sublinearly
Morse geodesics in CAT(0) spaces \cite{MQZ22} and invariance under suitable
sublinear bilipschitz equivalences \cite{PallierQing24}.  These works provide
the topological and functorial background used here.  In our general-space
results, the visual quasi-ultrametrics and their uniform comparison estimates
are additional hypotheses.  We neither modify the boundary construction nor
claim that these estimates follow from metrizability.

Uniform perfectness is a standard scale-nondegeneracy condition in metric
geometry; see \cite{Heinonen01,JarviVuorinen96}.  In the Morse setting, Han
and Liu introduced a gauge-by-gauge notion of uniform perfectness.  For a
proper geodesic space whose Morse boundary contains at least three points,
they proved that the following are equivalent: (i) uniform perfectness
together with uniformly Morse basedness, (ii) Morse geodesic richness, and
(iii) center exhaustivity \cite[Thm.~1.1]{HanLiu26}.  Once the input gauge is
fixed, their target gauge and annular constants are independent of the
basepoint \cite[Def.~3.4]{HanLiu26}.  They also characterize boundary
homeomorphisms induced by quasi-isometries under the equivalent hypotheses
\cite[Thm.~1.4]{HanLiu26}.

We investigate only the center-exhaustivity part of a possible analogue for
$\bdry X$.  The direct analogue fails: a fixed-basepoint condition does not
control finite dead ends, while, when $\kappa$ is unbounded, a single positive
absolute cutoff at all basepoints is not necessary for $\kappa$--center
exhaustivity.  The criteria below use radial accessibility or a
basepoint-dependent cutoff at the $\kappa$--scale.  We do not claim a full
sublinear analogue of the Han--Liu equivalence or its rigidity theorem.

Fix an increasing, concave, sublinear function
$\kappa\colon[0,\infty)\to[1,\infty)$.  Write $|x|=\dist(o,x)$ and
\[
 \rhok(t)=\int_0^t\frac{ds}{\kappa(s)}.
\]
Lemma~\ref{lem:rho-two-sided} gives, for large arguments, the two
conversions between $|s-t|=O(\kappa(t))$ and
$|\rhok(s)-\rhok(t)|=O(1)$.  We consider visual quantities comparable to
\[
 \exp\!\bigl(-\epsilon\rhok(\dist(o,\alpha_{\xi\eta}))\bigr),
\]
where $\alpha_{\xi\eta}$ joins two boundary points.

Theorem~\ref{thm:counterexample} gives a locally finite tree whose geodesics
have a common Morse gauge and whose fixed-basepoint boundary is uniformly
perfect, but which is neither $\kappa$--radially accessible nor
$\kappa$--center--exhaustive.  For locally finite trees,
Theorem~\ref{thm:radial-equivalence} proves that center exhaustivity is
equivalent to radial accessibility together with fixed-basepoint uniform
perfectness.  When $\kappa$ is unbounded,
Theorem~\ref{thm:allbasepoint-obstruction} gives a center-exhaustive tree whose
boundary diameters tend to zero along a sequence of basepoints; thus one
positive absolute annular cutoff cannot be imposed at every basepoint.
Proposition~\ref{prop:normalized-examples} compares the normalized cutoff on
the two examples.

For proper geodesic spaces, Theorem~\ref{thm:general-radial} proves that
fixed-basepoint uniform perfectness together with $\kappa$--radial
accessibility implies $\kappa$--center exhaustivity through $\Lambda$, under
the uniform visual data of Definition~\ref{def:uniform-visual-stratum}.
Proposition~\ref{prop:normalized-allbasepoint} gives a second sufficient
condition under the all-basepoint data of
Definition~\ref{def:all-basepoint-visual}, using the cutoff
\[
 r_{\kappa,A,\lambda}(x)
 =\lambda\exp\!\bigl(-\epsilon\rhok(A\kappa(|x|))\bigr).
\]
These visual-data assumptions are additional to the metrizability established
in \cite{QRT22,QRT24}; they are not asserted to hold on an arbitrary subset of
$\bdry X$.

Earlier versions claimed a fixed-basepoint equivalence and a quasisymmetric
rigidity statement.  Both claims are withdrawn.  The remaining results
concern the obstructions above, the center criteria just stated, and the
metric geometry of the renormalized boundary scale.

For a metric transform $\phi$, put
\[
 \omega_\phi(t)=\sup_{r>0}\frac{\phi(tr)}{\phi(r)}.
\]
Proposition~\ref{prop:relative-scale} proves that there is a quasisymmetric
distortion function depending only on $\phi$ for every identity
\[
 \id:(Z,d)\longrightarrow(Z,\phi\circ d)
\]
if and only if $\omega_\phi(t)\to0$ as $t\downarrow0$.
Proposition~\ref{prop:doubling-counterexample} gives a concave metric transform
for which this limit fails.

For the rooted $q$--ary tree, Theorem~\ref{thm:regular-dichotomy} proves that
the $\rhok$--renormalized boundary metric is uniformly perfect for every
$\kappa$, while quasisymmetry to a standard visual metric and doubling are
equivalent to boundedness of $\kappa$.  It also computes the Hausdorff
dimension.  Section~\ref{sec:dimension} gives a lower Assouad-dimension bound
for bounded uniformly perfect metric spaces.

%=============================================================================
\section{The renormalized scale}\label{sec:scale-prelim}
%=============================================================================

Throughout the paper, all metric balls are closed.  We assume that $\kappa\colon[0,\infty)\to[1,\infty)$ is increasing, concave, and sublinear:
\[
\lim_{t\to\infty}\frac{\kappa(t)}{t}=0.
\]
Bounded functions, including $\kappa\equiv1$, are allowed.

The constants implicit in $\asymp$ and $O(\kappa(R))$ are uniform in the variables quantified in the relevant statement; they may depend on fixed gauges and displayed parameters.  A $Q$--quasi-ultrametric is a symmetric function $d$ that vanishes precisely on the diagonal and satisfies
\[
 d(x,z)\le Q\max\{d(x,y),d(y,z)\}.
\]

\begin{lemma}[Elementary concavity estimates]\label{lem:kappa-concavity}
For $s,t\ge0$ and $\lambda\ge1$,
\[
 \kappa(s+t)\le \kappa(s)+\kappa(t),
 \qquad
 \kappa(\lambda t)\le \lambda\kappa(t).
\]
For $0\le\lambda\le1$,
\[
 \kappa(\lambda t)\ge\lambda\kappa(t).
\]
\end{lemma}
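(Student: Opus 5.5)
The whole lemma follows from concavity of $\kappa$ together with nonnegativity $\kappa(0)\ge 1>0$. I will prove the third inequality first and derive the other two from it.

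\emph{Third inequality.} For $0\le\lambda\le1$ we have $\lambda t=\lambda t+(1-\lambda)\cdot 0$. Concavity then gives
\[
\kappa(\lambda t)\ge\lambda\kappa(t)+(1-\lambda)\kappa(0)\ge\lambda\kappa(t),
\]
where the last step uses $\kappa(0)\ge1\ge0$ and $1-\lambda\ge0$.

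\emph{Second inequality.} Given $\lambda\ge1$, apply the third inequality with parameter $1/\lambda\in(0,1]$ at the point $\lambda t$. This yields $\kappa(t)\ge\lambda^{-1}\kappa(\lambda t)$, that is, $\kappa(\lambda t)\le\lambda\kappa(t)$.

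\emph{Subadditivity.} If $s+t=0$, then $s=t=0$ and the claim $\kappa(0)\le2\kappa(0)$ is immediate because $\kappa(0)\ge0$. Otherwise put $u=s+t>0$. The third inequality with $\lambda=s/u$ and with $\lambda=t/u$, both in $[0,1]$, gives
\[
\kappa(s)\ge\tfrac{s}{u}\kappa(u),\qquad \kappa(t)\ge\tfrac{t}{u}\kappa(u).
\]
Adding these two bounds gives $\kappa(s)+\kappa(t)\ge\kappa(u)=\kappa(s+t)$.

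There is no real obstacle here. The only points needing care are the degenerate case $s+t=0$ and the fact that the argument uses $\kappa(0)\ge0$, which holds because $\kappa$ takes values in $[1,\infty)$. Monotonicity and sublinearity of $\kappa$ are not needed.
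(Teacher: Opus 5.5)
Your proof is correct and is essentially the paper's argument: both rest on the chord inequality from the origin for a concave function, with $\kappa(0)\ge0$ absorbing the constant term. The paper instead passes to $g=\kappa-\kappa(0)$ and uses that $g(t)/t$ is nonincreasing, which also yields the slightly sharper $\kappa(s+t)\le\kappa(s)+\kappa(t)-\kappa(0)$; you keep the $(1-\lambda)\kappa(0)$ term and derive the other two inequalities from the third.
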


\begin{proof}
Set $g(t)=\kappa(t)-\kappa(0)$.  Then $g$ is nonnegative, increasing, concave, and $g(0)=0$.  The quotient $g(t)/t$ is nonincreasing on $(0,\infty)$.  This gives $g(\lambda t)\le\lambda g(t)$ for $\lambda\ge1$ and $g(\lambda t)\ge\lambda g(t)$ for $0\le\lambda\le1$.  It also gives subadditivity of $g$; for example, assuming $s\le t$,
\[
 g(s+t)\le \frac{s+t}{t}g(t)=g(t)+\frac{s}{t}g(t)\le g(t)+g(s).
\]
For subadditivity, this gives
\[
 \kappa(s+t)=g(s+t)+\kappa(0)
 \le g(s)+g(t)+\kappa(0)
 =\kappa(s)+\kappa(t)-\kappa(0)
 \le\kappa(s)+\kappa(t).
\]
The scaling inequalities for $\kappa$ follow similarly from those for $g$ and the fact that $\kappa(0)\ge0$.
\end{proof}

Define
\[
\rhok(t)=\int_0^t\frac{ds}{\kappa(s)}.
\]
The function $\rhok$ is continuous, strictly increasing, concave, and unbounded.  Indeed, for every $A>0$, eventually $\kappa(s)\le s/A$, and hence $\rhok(t)$ dominates $A\log t$ up to an additive constant.

\begin{lemma}[Sublinear errors become bounded]\label{lem:rho-forward}
For every $A\ge0$ and every $R\ge0$,
\[
0\le \rhok(R+A\kappa(R))-\rhok(R)\le A.
\]
\end{lemma}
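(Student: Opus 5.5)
Write the difference as an integral,
\[
 \rhok(R+A\kappa(R))-\rhok(R)=\int_R^{R+A\kappa(R)}\frac{ds}{\kappa(s)},
\]
and bound the integrand pointwise. Nonnegativity is immediate. The integrand is positive because $\kappa\ge1$, and the interval has nonnegative length because $A\ge0$ and $\kappa(R)\ge1>0$. Equivalently, $\rhok$ is increasing.

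For the upper bound, use only that $\kappa$ is increasing. For every $s\in[R,R+A\kappa(R)]$ we have $\kappa(s)\ge\kappa(R)$, so $1/\kappa(s)\le1/\kappa(R)$. Integrating over an interval of length $A\kappa(R)$ gives
\[
 \int_R^{R+A\kappa(R)}\frac{ds}{\kappa(s)}\le\frac{A\kappa(R)}{\kappa(R)}=A.
\]
Concavity and sublinearity are not needed for this direction; they matter for the reverse conversion in Lemma~\ref{lem:rho-two-sided}.

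There is no real obstacle here. The only point to check is that the integral is well defined. Since $\kappa$ is monotone, it is Riemann integrable on compact intervals, and since $\kappa\ge1$, the function $1/\kappa$ is bounded by $1$. The case $A=0$ is trivial, since then both sides vanish.
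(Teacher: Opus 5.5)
Your proposal is correct and matches the paper's proof: you write the difference as $\int_R^{R+A\kappa(R)}ds/\kappa(s)$, use $\kappa(s)\ge\kappa(R)$ on that interval, and integrate over an interval of length $A\kappa(R)$ to get the bound $A$. One small wording fix: the integrability you need is that of $1/\kappa$, which holds because $1/\kappa$ is itself monotone and bounded by $1$.
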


\begin{proof}
On the interval $[R,R+A\kappa(R)]$ one has $\kappa(s)\ge\kappa(R)$, and therefore
\[
\rhok(R+A\kappa(R))-\rhok(R)
 =\int_R^{R+A\kappa(R)}\frac{ds}{\kappa(s)}
 \le\frac{A\kappa(R)}{\kappa(R)}=A.
\]
\end{proof}

\begin{lemma}[Two-sided conversion of errors]\label{lem:rho-two-sided}
The following two statements hold.
\begin{enumerate}[label=(\roman*)]
\item For every $A\ge0$ there is $R_A\ge0$ such that, whenever $R\ge R_A$ and
\[
 |s-R|\le A\kappa(R),
\]
one has
\[
 |\rhok(s)-\rhok(R)|\le2A.
\]
\item For every $C\ge0$ there is $R_C\ge1$ such that, whenever $R\ge R_C$ and
\[
 |\rhok(s)-\rhok(R)|\le C,
\]
one has
\[
 |s-R|\le (e-1)C\kappa(R).
\]
\end{enumerate}
\end{lemma}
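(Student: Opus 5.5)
Both parts reduce to comparing $\kappa(s)$ with $\kappa(R)$ on the interval between $s$ and $R$, using monotonicity, sublinearity, and Lemma~\ref{lem:kappa-concavity}.

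\emph{Part (i).} Fix $A\ge0$. By sublinearity choose $R_A$ so that $A\kappa(R)\le R/2$ for $R\ge R_A$. Then any $s$ with $|s-R|\le A\kappa(R)$ satisfies $s\ge R/2$. There are two cases.
\begin{itemize}
\item If $s\ge R$, Lemma~\ref{lem:rho-forward} gives $0\le\rhok(s)-\rhok(R)\le A$ directly, since $\rhok$ is increasing.
\item If $s<R$, then on $[s,R]$ monotonicity gives $\kappa(u)\ge\kappa(s)\ge\kappa(R/2)\ge\tfrac12\kappa(R)$. The last inequality is the $\lambda=1/2$ case of Lemma~\ref{lem:kappa-concavity}. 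Hence
\[
\rhok(R)-\rhok(s)\le \frac{R-s}{\kappa(R)/2}\le 2A.
\]
\end{itemize}

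\emph{Part (ii).} Fix $C\ge0$; the case $C=0$ is trivial because $\rhok$ is strictly increasing, so assume $C>0$. Again there are two cases.
\begin{itemize}
\item If $s\le R$, then $\kappa(u)\le\kappa(R)$ on $[s,R]$. So $\rhok(R)-\rhok(s)\ge (R-s)/\kappa(R)$, which gives $R-s\le C\kappa(R)\le(e-1)C\kappa(R)$ with no largeness needed.
\item If $s>R$, put $s=R+h$. For $u\in[R,R+h]$, subadditivity from Lemma~\ref{lem:kappa-concavity} gives $\kappa(u)\le\kappa(R)+\kappa(u-R)$. This is too crude when $h$ is large, so I would use the concavity slope bound instead. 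Since $\kappa(u)\le\kappa(R)+\kappa'(R^-)(u-R)$ and $\kappa'(R^-)\le\kappa(R)/R$ (from $g(t)/t$ nonincreasing), we get
\[
\kappa(u)\le\kappa(R)\Bigl(1+\frac{u-R}{R}\Bigr)=\frac{\kappa(R)}{R}\,u.
\]
Therefore
\[
C\ge\int_R^{R+h}\frac{du}{\kappa(u)}\ge \frac{R}{\kappa(R)}\log\Bigl(1+\frac hR\Bigr).
\]
Writing $\delta=C\kappa(R)/R$, this yields $h\le R(e^{\delta}-1)$. Choose $R_C\ge1$ by sublinearity so that $\delta\le1$ for $R\ge R_C$. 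Convexity of $e^x$ gives $e^\delta-1\le(e-1)\delta$ on $[0,1]$, so $h\le(e-1)C\kappa(R)$, which is exactly the stated constant.
\end{itemize}

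\emph{Main obstacle.} The delicate step is the upper estimate when $s>R$ in part (ii). Here $\kappa$ may grow, so one cannot bound the integral below by $h/\kappa(R)$. The linear majorant $\kappa(u)\le(\kappa(R)/R)\,u$ handles this. The bound $g(u)/u\le g(R)/R$ alone only gives it for $g$, so it must be justified with care for $\kappa=g+\kappa(0)$. If the tangent-line version is awkward, I would use the bound $\kappa(u)\le (u/R)\kappa(R)$ for $u\ge R$, which follows from Lemma~\ref{lem:kappa-concavity} with $\lambda=u/R\ge1$. This avoids derivatives and gives the same logarithmic integral.
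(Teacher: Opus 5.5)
Your proposal is correct and follows the paper's proof essentially step for step. In (i) it uses the same choice $A\kappa(R)\le R/2$, applies Lemma~\ref{lem:rho-forward} when $s\ge R$, and uses $\kappa(s)\ge\frac12\kappa(R)$ when $s<R$. In (ii) it uses the same case split, the majorant $\kappa(u)\le (u/R)\kappa(R)$ for $u\ge R$, the resulting logarithmic integral, and $e^\delta-1\le(e-1)\delta$ for $\delta\le1$. Your fallback via Lemma~\ref{lem:kappa-concavity} with $\lambda=u/R$ is exactly the paper's route, so the tangent-line detour is unnecessary.
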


\begin{proof}
For (i), the case $A=0$ is immediate.  By sublinearity, choose $R_A$ so that
\[
 A\kappa(R)\le \frac R2
\]
for $R\ge R_A$.  If $s\ge R$, then $s\le R+A\kappa(R)$ and Lemma~\ref{lem:rho-forward} gives
\[
 0\le\rhok(s)-\rhok(R)\le A.
\]
If $s\le R$, then $s\ge R/2$.  Lemma~\ref{lem:kappa-concavity} gives
\[
 \kappa(s)\ge\frac{s}{R}\kappa(R)\ge\frac12\kappa(R),
\]
and hence
\[
 0\le\rhok(R)-\rhok(s)
 \le\frac{R-s}{\kappa(s)}
 \le2A.
\]

For (ii), choose $R_C\ge1$ so that $C\kappa(R)/R\le1$ for $R\ge R_C$.  If $s\le R$, then
\[
 C\ge\rhok(R)-\rhok(s)
 \ge\frac{R-s}{\kappa(R)},
\]
so $R-s\le C\kappa(R)$.  If $s\ge R$, Lemma~\ref{lem:kappa-concavity} gives
\[
 \kappa(u)\le\frac{u}{R}\kappa(R)
 \qquad (u\ge R).
\]
Therefore
\[
 C\ge\rhok(s)-\rhok(R)
 \ge\frac{R}{\kappa(R)}\log\frac{s}{R}.
\]
Writing $\delta=C\kappa(R)/R\le1$, we obtain
\[
 s-R\le R(e^\delta-1)
 \le(e-1)R\delta
 =(e-1)C\kappa(R).
\]
Since $e-1>1$, the same displayed bound also covers the case $s\le R$.
\end{proof}

\begin{lemma}[Increment estimates]\label{lem:rho-increments}
For integers $n,L\ge0$ with $L\ge1$,
\[
 \frac{L}{\kappa(n+L)}
 \le \rhok(n+L)-\rhok(n)
 \le \frac{L}{\kappa(n)}.
\]
Consequently:
\begin{enumerate}[label=(\roman*)]
\item $0<\rhok(n+1)-\rhok(n)\le1$;
\item if $\kappa$ is unbounded, then for every fixed $L$,
\[
\rhok(n+L)-\rhok(n)\longrightarrow0;
\]
\item if $\kappa\le M$, then for all $0\le s\le t$,
\[
 \frac{t-s}{M}\le\rhok(t)-\rhok(s)\le t-s.
\]
\end{enumerate}
\end{lemma}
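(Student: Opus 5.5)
The main two-sided estimate comes straight from the integral definition of $\rhok$ and the monotonicity of $\kappa$. Write
\[
 \rhok(n+L)-\rhok(n)=\int_n^{n+L}\frac{ds}{\kappa(s)}.
\]
Since $\kappa$ is increasing, on $[n,n+L]$ we have $\kappa(n)\le\kappa(s)\le\kappa(n+L)$. Hence the integrand lies between $1/\kappa(n+L)$ and $1/\kappa(n)$, and integrating over an interval of length $L$ gives both bounds at once. This is the same computation as in Lemma~\ref{lem:rho-forward}, now done on both sides.

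The consequences then follow in order.

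(i) Take $L=1$. Positivity holds because $\kappa<\infty$, so the lower bound $1/\kappa(n+1)$ is strictly positive. The upper bound $1/\kappa(n)\le1$ holds because $\kappa\ge1$ by hypothesis.

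(ii) Use the upper bound $L/\kappa(n)$. Since $\kappa$ is increasing and unbounded, $\kappa(n)\to\infty$ as $n\to\infty$, so $L/\kappa(n)\to0$ for fixed $L$. The lower bound is not needed here.

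(iii) This statement is about real $0\le s\le t$ rather than integers. I will not use the integer statement; instead I apply the integral argument directly on $[s,t]$, using $1\le\kappa\le M$. Then $1/M\le 1/\kappa(u)\le 1$ on the interval, and integrating gives
\[
 \frac{t-s}{M}\le\rhok(t)-\rhok(s)\le t-s .
\]

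There is no real obstacle. The only points needing care are two small ones. First, (iii) must be derived from the integral for real arguments, not deduced from the integer estimate. Second, the strict positivity in (i) relies on $\kappa$ being finite-valued, which holds since it maps into $[1,\infty)$.
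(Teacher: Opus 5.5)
Your proposal is correct and follows essentially the same route as the paper. The paper bounds $1/\kappa$ on $[n,n+L]$ by monotonicity and integrates to get the main display. It reads off (i) and (ii) from that display, and it proves (iii) by integrating $1/M\le 1/\kappa\le 1$ directly over the real interval $[s,t]$, just as you do.
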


\begin{proof}
Since $\kappa$ is increasing, $1/\kappa$ is decreasing.  Thus on $[n,n+L]$ it lies between $1/\kappa(n+L)$ and $1/\kappa(n)$.  Integrating gives the first display.  Parts (i) and (ii) follow from that display.  If $\kappa\le M$, then $1/M\le1/\kappa(u)\le1$ for all $u$, and integration over $[s,t]$ gives (iii).
\end{proof}

\begin{lemma}[Average renormalized growth]\label{lem:rho-average}
Because $\kappa$ is increasing, the limit $K_\infty:=\lim_{t\to\infty}\kappa(t)$ exists in $[1,\infty]$.  Moreover,
\[
 \lim_{t\to\infty}\frac{\rhok(t)}{t}
 =\begin{cases}
 K_\infty^{-1},&K_\infty<\infty,\\[2mm]
 0,&K_\infty=\infty.
 \end{cases}
\]
\end{lemma}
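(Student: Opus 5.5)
The limit $K_\infty$ exists in $[1,\infty]$ because $\kappa$ is increasing with values in $[1,\infty)$. The main claim is a Cesàro-type statement for the integrand $1/\kappa$: the function $1/\kappa(s)$ is nonincreasing and converges to $K_\infty^{-1}$ (interpreted as $0$ when $K_\infty=\infty$). So $\rhok(t)/t$ is the running average of a monotone function with a limit, and the average should converge to the same limit.

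First, the lower bound. Since $1/\kappa(s)\ge 1/\kappa(t)$ for $s\le t$, we get $\rhok(t)\ge t/\kappa(t)$. Also $1/\kappa(s)\ge K_\infty^{-1}$ for every $s$. Hence
\[
\liminf_{t\to\infty}\frac{\rhok(t)}{t}\ge K_\infty^{-1},
\]
which is trivial when $K_\infty=\infty$.

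Second, the upper bound. Fix $\varepsilon>0$ and choose $T$ such that $1/\kappa(s)\le K_\infty^{-1}+\varepsilon$ for all $s\ge T$. Split the integral at $T$. On $[0,T]$ the integrand is at most $1$, since $\kappa\ge1$. This gives, for $t\ge T$,
\[
\rhok(t)\le T+(t-T)\bigl(K_\infty^{-1}+\varepsilon\bigr).
\]
Dividing by $t$ and letting $t\to\infty$ yields
\[
\limsup_{t\to\infty}\frac{\rhok(t)}{t}\le K_\infty^{-1}+\varepsilon.
\]
Since $\varepsilon>0$ was arbitrary, the upper bound matches the lower bound.

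Combining the two bounds gives both cases of the formula at once, with the convention $\infty^{-1}=0$. The only step requiring any care is the splitting at $T$, and even that is routine. No earlier lemma is needed beyond monotonicity of $\kappa$, although Lemma~\ref{lem:rho-increments}(iii) already settles the bounded case up to constants.
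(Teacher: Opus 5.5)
Your proof is correct and follows the paper's argument: the paper cites the integral Ces\`aro theorem for $1/\kappa(s)\to K_\infty^{-1}$, and your lower bound $1/\kappa\ge K_\infty^{-1}$ together with the split at $T$ for the upper bound is the standard proof of that theorem. Writing it out also makes clear that monotonicity alone gives the lower bound, with no limit argument needed.
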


\begin{proof}
The function $1/\kappa(t)$ converges to $1/K_\infty$, with the convention $1/\infty=0$.  The conclusion is the integral version of the Ces\`aro convergence theorem:
\[
\frac{\rhok(t)}{t}=\frac1t\int_0^t\frac{ds}{\kappa(s)}\longrightarrow \frac1{K_\infty}.
\]
\end{proof}

%=============================================================================
\section{Center criteria}\label{sec:general-radial}
%=============================================================================

Let $(X,\dist,o)$ be a proper geodesic metric space and write
$|x|=\dist(o,x)$.  We use the definitions of $\kappa$--Morse rays and closed
subsets from \cite{QRT24}.  A bi-infinite geodesic is called $\kappa$--Morse
when its image is $\kappa$--Morse as a closed subset.  For common-endpoint
tracking we use \cite[Lem.~3.4(i)]{QRT24}.

\begin{definition}[Uniform $\kappa$--visual data]\label{def:uniform-visual-stratum}
Let $\Lambda\subset\bdry X$ contain at least three points.  A \emph{uniform $\kappa$--visual datum on $\Lambda$ based at $o$} consists of the following.
\begin{enumerate}[label=(\roman*)]
\item For every $\xi\in\Lambda$, a geodesic ray
\[
 \sigma_\xi:[0,\infty)\longrightarrow X,
 \qquad \sigma_\xi(0)=o,
\]
representing $\xi$, such that all the rays $\sigma_\xi$ are $\kappa$--Morse with a common gauge $m_{\mathrm r}$.
\item For every distinct $\xi,\eta\in\Lambda$, a chosen bi-infinite geodesic $\alpha_{\xi\eta}=\alpha_{\eta\xi}$ joining $\xi$ to $\eta$, such that all the closed sets $\alpha_{\xi\eta}$ are $\kappa$--Morse with a common gauge $m_{\mathrm l}$.  Here a line is said to join $\xi$ to $\eta$ when, for every nearest point $z$ of $o$ on $\alpha_{\xi\eta}$, the two quasigeodesic rays
\[
 [o,z]\cup\alpha_{\xi\eta}[z,\xi),
 \qquad
 [o,z]\cup\alpha_{\xi\eta}[z,\eta)
\]
represent $\xi$ and $\eta$, respectively, in the sublinearly Morse boundary.
\item Constants $\epsilon>0$, $C_{\mathrm v}\ge1$, and $Q\ge1$, and a symmetric function
\[
 d_\Lambda:\Lambda\times\Lambda\longrightarrow[0,\infty)
\]
which vanishes precisely on the diagonal, satisfies the quasi-ultrametric inequality
\[
 d_\Lambda(\xi,\zeta)
 \le Q\max\{d_\Lambda(\xi,\eta),d_\Lambda(\eta,\zeta)\},
\]
and obeys the visual comparison
\[
 C_{\mathrm v}^{-1}
 e^{-\epsilon\rhok(D_o(\xi,\eta))}
 \le d_\Lambda(\xi,\eta)
 \le C_{\mathrm v}
 e^{-\epsilon\rhok(D_o(\xi,\eta))},
 \qquad
 D_o(\xi,\eta):=\dist(o,\alpha_{\xi\eta}).
\]
\end{enumerate}
\end{definition}

We call $d_\Lambda$ uniformly perfect if there are $S>1$ and $r_0>0$ such that, for every $\xi\in\Lambda$ and every $0<r\le r_0$, one can find $\eta\in\Lambda$ with
\[
 \frac rS<d_\Lambda(\xi,\eta)\le r.
\]

\begin{definition}[Radial accessibility through $\Lambda$]\label{def:general-radial-access}
The space $X$ is \emph{$\kappa$--radially accessible through $\Lambda$ from $o$} if there exists $A\ge0$ such that, for every $x\in X$, there are $\xi\in\Lambda$ and $t\ge0$ satisfying
\[
 \dist(x,\sigma_\xi(t))\le A\kappa(|x|).
\]
\end{definition}

\begin{definition}[Center exhaustivity through $\Lambda$]\label{def:stratum-CE}
The space $X$ is \emph{$\kappa$--center--exhaustive through $\Lambda$} if there exists $K\ge0$ such that, for every $x\in X$, there are distinct $\xi,\eta,\zeta\in\Lambda$ with
\[
 \max\bigl\{
 \dist(x,\alpha_{\xi\eta}),
 \dist(x,\alpha_{\xi\zeta}),
 \dist(x,\alpha_{\eta\zeta})
 \bigr\}
 \le K\kappa(|x|).
\]
\end{definition}

\begin{lemma}[Transfer of $\kappa$--scale]\label{lem:kappa-scale-transfer}
For every $A\ge0$ there is $B_A\ge1$ with the following properties.
\begin{enumerate}[label=(\roman*)]
\item If $s,t\ge0$ and $|s-t|\le A\kappa(t)$, then
\[
 \kappa(s)\le B_A\kappa(t).
\]
\item If $x,y\in X$ and $\dist(x,y)\le A\kappa(|x|)$, then
\[
 \kappa(|y|)\le B_A\kappa(|x|).
\]
\item If
\[
 \dist(x,y)\le A\kappa(|x|)
 \quad\text{and}\quad
 \dist(y,z)\le C\kappa(|y|),
\]
then $\dist(x,z)\le(A+CB_A)\kappa(|x|)$.
\end{enumerate}
\end{lemma}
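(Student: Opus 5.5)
The plan is to prove (i) by splitting according to the sign of $s-t$, to deduce (ii) from (i) via the triangle inequality, and to obtain (iii) by chaining (ii) with the triangle inequality. Part (iii) also needs $B_A$ to be independent of $C$; this holds because $B_A$ is produced in (i) from $A$ alone.

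For (i), the case $s\le t$ is immediate: $\kappa$ is increasing, so $\kappa(s)\le\kappa(t)$. Suppose $s>t$. Then $s\le t+A\kappa(t)$, and the subadditivity in Lemma~\ref{lem:kappa-concavity} gives
\[
 \kappa(s)\le\kappa(t)+\kappa(A\kappa(t)).
\]
To control the last term, write $\lambda=\max\{A,1\}$ and use the scaling inequality of Lemma~\ref{lem:kappa-concavity}:
\[
 \kappa(A\kappa(t))\le\kappa(\lambda\kappa(t))\le\lambda\kappa(\kappa(t)).
\]
It remains to compare $\kappa(\kappa(t))$ with $\kappa(t)$. If $\kappa(t)\le t$, monotonicity gives $\kappa(\kappa(t))\le\kappa(t)$. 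Otherwise $t<\kappa(t)$. Again by Lemma~\ref{lem:kappa-concavity}, $\kappa(u)\le\kappa(0)+\kappa(1)u$ for $u\ge1$, hence $\kappa(u)\le 2\kappa(1)\max\{u,1\}$ for all $u\ge0$. With $u=\kappa(t)\ge1$ this gives $\kappa(\kappa(t))\le 2\kappa(1)\kappa(t)$.

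In every case $\kappa(\kappa(t))\le 2\kappa(1)\kappa(t)$, so
\[
 \kappa(s)\le\bigl(1+2\lambda\kappa(1)\bigr)\kappa(t).
\]
Take $B_A=1+2\max\{A,1\}\kappa(1)$, which is at least $1$. The only delicate point is this comparison between $\kappa(\kappa(t))$ and $\kappa(t)$ when $t<\kappa(t)$, and the linear bound on $\kappa$ handles it. Boundedness of $\kappa$ is not needed.

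For (ii), the triangle inequality gives $\bigl||y|-|x|\bigr|\le\dist(x,y)\le A\kappa(|x|)$. Applying (i) with $s=|y|$ and $t=|x|$ yields $\kappa(|y|)\le B_A\kappa(|x|)$.

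For (iii), combine the triangle inequality with (ii):
\[
 \dist(x,z)\le\dist(x,y)+\dist(y,z)\le A\kappa(|x|)+C\kappa(|y|)\le A\kappa(|x|)+CB_A\kappa(|x|).
\]
This is the bound $(A+CB_A)\kappa(|x|)$, as required.
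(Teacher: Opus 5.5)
Your proof is correct, and it takes a different route from the paper's for part (i). Parts (ii) and (iii) are handled exactly as in the paper.

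The paper's argument for (i) uses sublinearity. It chooses $T$ with $\kappa(t)\le t$ for $t\ge T$. For $t\ge T$ this gives $s\le(1+A)t$, and the scaling inequality yields $\kappa(s)\le(1+A)\kappa(t)$. For $t\le T$, it bounds the ratio crudely by $\kappa(T+A\kappa(T))$, using $\kappa\ge1$.

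You instead use subadditivity to reduce to the term $\kappa(A\kappa(t))$. You then control it by the linear growth bound for $\kappa$ at the point $u=\kappa(t)\ge1$; this is the same device the paper itself uses later, in the proof of Proposition~\ref{prop:normalized-allbasepoint}.

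What your route buys:
\begin{itemize}
\item It never uses sublinearity, so it holds for any increasing concave $\kappa\ge1$.
\item It needs no threshold and no split into large and small $t$.
\item It gives an explicit constant depending only on $A$ and $\kappa(1)$.
\end{itemize}
What the paper's route buys is the sharper asymptotic constant $1+A$ for large $t$, at the cost of a non-explicit constant from the bounded range $t\le T$.

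Two minor simplifications are available. First, your case split on whether $\kappa(t)\le t$ is unnecessary, because $\kappa(t)\ge1$ always holds. Second, the scaling inequality of Lemma~\ref{lem:kappa-concavity}, applied at base point $1$, gives $\kappa(u)\le\kappa(1)u$ directly for $u\ge1$, with no $\kappa(0)$ term. Hence
\[
 \kappa(\kappa(t))\le\kappa(1)\kappa(t),
\]
and $B_A=1+\max\{A,1\}\kappa(1)$ already suffices.
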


\begin{proof}
By sublinearity, choose $T$ so that $\kappa(t)\le t$ for $t\ge T$.  If $t\ge T$ and $|s-t|\le A\kappa(t)$, then
\[
 s\le t+A\kappa(t)\le(1+A)t,
\]
so Lemma~\ref{lem:kappa-concavity} gives
\[
 \kappa(s)\le(1+A)\kappa(t).
\]
When $t\le T$, the same ratio is bounded because $s\le T+A\max_{[0,T]}\kappa$ and $\kappa\ge1$.  This proves (i).  Part (ii) follows from (i) and
\[
 \bigl||y|-|x|\bigr|\le\dist(x,y).
\]
Part (iii) is the triangle inequality followed by (ii).
\end{proof}

\begin{lemma}[Nearest-point rerooting]\label{lem:nearest-reroot}
There is a constant $M\ge0$, depending only on the common ray gauge $m_{\mathrm r}$, with the following property.  Let $\xi\ne\eta$ lie in $\Lambda$, let $z\in\alpha_{\xi\eta}$ realize
\[
 |z|=D_o(\xi,\eta),
\]
and let $\alpha_{\xi\eta}[z,\xi)$ and $\alpha_{\xi\eta}[z,\eta)$ denote the two tails.  Then each concatenation
\[
 [o,z]\cup\alpha_{\xi\eta}[z,\xi),
 \qquad
 [o,z]\cup\alpha_{\xi\eta}[z,\eta)
\]
is a $(3,0)$--quasigeodesic ray based at $o$.  Moreover,
\[
 \dist(z,\sigma_\xi)\le M\kappa(|z|),
 \qquad
 \dist(z,\sigma_\eta)\le M\kappa(|z|).
\]
\end{lemma}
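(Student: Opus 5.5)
The lemma has two parts, and I will handle them separately: first the $(3,0)$--quasigeodesic property, then the tracking estimate.

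\textbf{Quasigeodesic property.} This is the standard nearest-point concatenation argument and does not use the Morse hypotheses. Parametrize $\gamma=[o,z]\cup\alpha_{\xi\eta}[z,\xi)$ by arc length. For $p\in[o,z]$ and $q\in\alpha_{\xi\eta}[z,\xi)$, the length along $\gamma$ is $\dist(p,z)+\dist(z,q)$. Since $z$ is a nearest point of $o$ on $\alpha_{\xi\eta}$, we have $\dist(o,q)\ge|z|$, so
\[
 \dist(z,q)\le\dist(z,p)+\dist(p,q)+\dist(q,o)-\dist(o,q).
\]
A cleaner route is this. For any $w\in[o,z]$, the point $z$ is also a nearest point of $w$ on $\alpha_{\xi\eta}$, since otherwise moving from $o$ to $w$ and then to a closer point would beat $|z|$. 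Hence $\dist(p,q)\ge\dist(p,z)$. Then
\[
 \dist(z,q)\le\dist(z,p)+\dist(p,q)\le2\dist(p,q),
\]
so
\[
 \dist(p,z)+\dist(z,q)\le3\dist(p,q).
\]
Pairs of points lying on the same piece are geodesic, so $\gamma$ is a $(3,0)$--quasigeodesic ray. The same argument applies to the $\eta$ tail.

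\textbf{Tracking estimate.} By Definition~\ref{def:uniform-visual-stratum}(ii), this quasigeodesic ray represents $\xi$, as does $\sigma_\xi$, which is $\kappa$--Morse with gauge $m_{\mathrm r}$. I will apply the common-endpoint tracking result \cite[Lem.~3.4(i)]{QRT24}. In the form I expect to use, it says: a $(q,Q)$--quasigeodesic ray from $o$ representing the same class as a $\kappa$--Morse ray $\sigma$ with gauge $m$ stays in the $m(q,Q)\kappa$--neighbourhood of $\sigma$. That is, every point $p$ on it satisfies $\dist(p,\sigma)\le m(q,Q)\kappa(|p|)$, possibly up to a multiplicative constant depending only on $m$. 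Applying this at $p=z$ with $(q,Q)=(3,0)$ gives $\dist(z,\sigma_\xi)\le M\kappa(|z|)$ with $M=m_{\mathrm r}(3,0)$, or a fixed multiple of it. This constant depends only on $m_{\mathrm r}$. The $\eta$ case is identical.

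\textbf{Main obstacle.} The difficulty is matching the exact form of \cite[Lem.~3.4(i)]{QRT24}. It may be stated as a fellow-travelling bound inside balls of radius $r$, in the form $\dist\le m\kappa(r)$, rather than pointwise. If so, I would take $r=|z|$ and use Lemma~\ref{lem:kappa-scale-transfer} to replace $\kappa$ of nearby norms by $\kappa(|z|)$. The point $z$ lies at parameter $|z|$ on the ray and has norm $|z|$, so the comparison costs only a factor $B_A$, which again depends only on the gauge.
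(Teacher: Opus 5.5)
Your proposal is correct and follows the paper's proof. For the $(3,0)$ bound you use that $z$ stays a nearest point of the line for every $w\in[o,z]$, which is exactly the paper's step $|z|\le|v|\le|u|+\dist(u,v)$ yielding $\dist(u,z)\le\dist(u,v)$; for the tracking estimate you apply the common-endpoint lemma of Qing--Rafi--Tiozzo to the $(3,0)$--ray and $\sigma_\xi$ and evaluate at $z$, as the paper does (your abandoned first display is vacuous and your closing contingency is unnecessary, but neither affects the argument).
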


\begin{proof}
The nearest point $z$ exists because $X$ is proper and the image of a bi-infinite geodesic is closed.  Consider the first concatenation.  It is enough to check two points $u\in[o,z]$ and $v\in\alpha_{\xi\eta}[z,\xi)$.  Since $z$ minimizes distance from $o$ to the line,
\[
 |z|\le |v|\le |u|+\dist(u,v).
\]
As $|z|=|u|+\dist(u,z)$, this gives $\dist(u,z)\le\dist(u,v)$.  Therefore the length in the concatenation between $u$ and $v$ is at most
\[
 \dist(u,z)+\dist(z,v)
 \le \dist(u,z)+\dist(u,v)+\dist(u,z)
 \le3\dist(u,v).
\]
The reverse inequality follows from the metric inequality, so the concatenation is a $(3,0)$--quasigeodesic.  The other tail is identical.

By the meaning of a line joining two boundary points, these quasigeodesic rays represent $\xi$ and $\eta$, respectively.  Apply the common-endpoint stability lemma \cite[Lem.~3.4(i)]{QRT24} to each one and the corresponding geodesic ray $\sigma_\xi$ or $\sigma_\eta$.  The tracking constant depends only on $m_{\mathrm r}$ and the fixed quasigeodesic constants $(3,0)$.  Evaluating at $z$ gives the two displayed estimates.
\end{proof}

\begin{lemma}[Entry at a balanced depth]\label{lem:balanced-entry}
For every $E\ge0$ there are $L_E\ge0$ and $t_E\ge0$ such that the following holds.  If $\xi\ne\eta$ lie in $\Lambda$, $t\ge t_E$, and
\[
 \bigl|\rhok(D_o(\xi,\eta))-\rhok(t)\bigr|\le E,
\]
then
\[
 \dist(\sigma_\xi(t),\alpha_{\xi\eta})
 \le L_E\kappa(t),
 \qquad
 \dist(\sigma_\eta(t),\alpha_{\xi\eta})
 \le L_E\kappa(t),
\]
and
\[
 \dist(\sigma_\xi(t),\sigma_\eta(t))
 \le2L_E\kappa(t).
\]
\end{lemma}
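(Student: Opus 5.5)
Let $z\in\alpha_{\xi\eta}$ be a nearest point to $o$, so $|z|=D_o(\xi,\eta)$. By Lemma~\ref{lem:rho-two-sided}(ii) with $C=E$, once $t\ge R_E$ we get
\[
\bigl||z|-t\bigr|\le (e-1)E\kappa(t).
\]
Lemma~\ref{lem:kappa-scale-transfer}(i) then gives $\kappa(|z|)\le B\kappa(t)$ with $B=B_{(e-1)E}$. This is the only place the renormalized hypothesis is used. The rest is a comparison at the $\kappa$--scale along $\sigma_\xi$.

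By Lemma~\ref{lem:nearest-reroot} there is a point $p=\sigma_\xi(s)$ with
\[
\dist(z,p)\le M\kappa(|z|)\le MB\kappa(t).
\]
Since $|p|=s$ and $\bigl||p|-|z|\bigr|\le\dist(z,p)$, we have
\[
|s-t|\le \bigl((e-1)E+MB\bigr)\kappa(t).
\]
As $\sigma_\xi$ is a geodesic ray from $o$, it follows that $\dist(\sigma_\xi(t),p)=|s-t|$. The triangle inequality then yields
\[
\dist(\sigma_\xi(t),\alpha_{\xi\eta})\le\dist(\sigma_\xi(t),z)\le |s-t|+\dist(p,z)\le \bigl((e-1)E+2MB\bigr)\kappa(t).
\]
So we can take
\[
L_E=(e-1)E+2MB_{(e-1)E}\quad\text{and}\quad t_E=R_E .
\]
The same computation with $\sigma_\eta$ gives the estimate for $\eta$.

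For the last inequality, note that both $\sigma_\xi(t)$ and $\sigma_\eta(t)$ lie within $(L_E-\dist(\cdot))$ of the same point $z$; more precisely, each is within $L_E\kappa(t)$ of $z$ itself, not merely of the line. The triangle inequality through $z$ then gives $2L_E\kappa(t)$. For this reason I will phrase the first two bounds as bounds on the distance to $z$. This automatically bounds the distance to $\alpha_{\xi\eta}$, and the third bound follows at once.

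The main subtlety is the constant dependence. The constant $M$ from Lemma~\ref{lem:nearest-reroot} is stated in terms of $\kappa(|z|)$, not $\kappa(t)$, and $|z|$ may be small when $t$ is large only if the hypothesis fails. Lemma~\ref{lem:rho-two-sided}(ii) excludes exactly that case for $t\ge R_E$, so the transfer through Lemma~\ref{lem:kappa-scale-transfer} handles it. No case split on $s\lessgtr t$ is needed, because the ray is geodesic from $o$.
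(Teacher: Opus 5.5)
Your proof is correct and is essentially the paper's own argument. You convert the $\rhok$-hypothesis into $|D_o(\xi,\eta)-t|\le (e-1)E\kappa(t)$ via Lemma~\ref{lem:rho-two-sided}(ii), transfer to $\kappa(|z|)\le B\kappa(t)$, use Lemma~\ref{lem:nearest-reroot} to place a point of $\sigma_\xi$ near the nearest point $z$, and bound $\dist(\sigma_\xi(t),z)$; this gives the same constant $L_E=(e-1)E+2MB_{(e-1)E}$ and the third estimate through $z$, with only the garbled phrase ``within $(L_E-\dist(\cdot))$'' needing the correction you supply in the next sentence.
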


\begin{proof}
Write $D=D_o(\xi,\eta)$ and choose a nearest point $z\in\alpha_{\xi\eta}$.  By Lemma~\ref{lem:rho-two-sided}(ii), after increasing $t_E$ there is $A_E$ such that
\[
 |D-t|\le A_E\kappa(t).
\]
Lemma~\ref{lem:kappa-scale-transfer} then gives
\[
 \kappa(D)\le B_{A_E}\kappa(t).
\]
By Lemma~\ref{lem:nearest-reroot}, choose $u_\xi\in\sigma_\xi$ with
\[
 \dist(z,u_\xi)\le M\kappa(D).
\]
If $s_\xi=|u_\xi|$, then $|s_\xi-D|\le\dist(z,u_\xi)$, and hence
\begin{align*}
 \dist(\sigma_\xi(t),z)
 &\le |t-s_\xi|+\dist(u_\xi,z)\\
 &\le |t-D|+2\dist(u_\xi,z)\\
 &\le\bigl(A_E+2MB_{A_E}\bigr)\kappa(t).
\end{align*}
The same estimate holds for $\eta$.  Taking
\[
 L_E=A_E+2MB_{A_E}
\]
proves the first two inequalities, and the last follows through the common point $z$.
\end{proof}

\begin{lemma}[Balanced boundary triples]\label{lem:UP-balanced-triple}
Let $d_\Lambda$ be a quasi-ultrametric on $\Lambda$ with constant $Q\ge1$.  Fix $\xi\in\Lambda$, $S>1$, and $r>0$.  Suppose that, for every $0<s\le r$, there is $\theta\in\Lambda$ such that
\[
 \frac{s}{S}<d_\Lambda(\xi,\theta)\le s.
\]
Then there are distinct $\eta,\zeta\in\Lambda\setminus\{\xi\}$ such that every pair among $\xi,\eta,\zeta$ satisfies
\[
 c_0r<d_\Lambda(\cdot,\cdot)\le C_0r,
 \qquad
 c_0:=\frac1{2QS^2},
 \quad
 C_0:=Q.
\]
\end{lemma}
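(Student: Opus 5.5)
Apply the hypothesis at two scales and control the third distance with the quasi-ultrametric inequality. First take $s=r$ to get $\eta$ with $r/S<d_\Lambda(\xi,\eta)\le r$. Then take a smaller scale $s=r/(2QS)$. It is admissible because $2QS>1$, so $0<s\le r$. This gives $\zeta$ with
\[
 \frac{r}{2QS^2}<d_\Lambda(\xi,\zeta)\le\frac{r}{2QS}.
\]
Both points differ from $\xi$ because the distances are positive. They differ from each other because $d_\Lambda(\xi,\zeta)\le r/(2QS)<r/S<d_\Lambda(\xi,\eta)$.

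Next check the three pairs against the interval $(c_0r,C_0r]$.
\begin{enumerate}[label=(\alph*)]
\item The pair $(\xi,\eta)$ lies in $(r/S,r]$. Since $c_0=1/(2QS^2)\le 1/S$ and $C_0=Q\ge1$, it lies in the required interval.
\item The pair $(\xi,\zeta)$ lies in $(c_0r,\,r/(2QS)]$, and $r/(2QS)\le r\le Qr$.
\item For the pair $(\eta,\zeta)$, the upper bound comes from the quasi-ultrametric inequality: $d_\Lambda(\eta,\zeta)\le Q\max\{d_\Lambda(\eta,\xi),d_\Lambda(\xi,\zeta)\}\le Qr$.
\end{enumerate}

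The lower bound for $d_\Lambda(\eta,\zeta)$ is the only step needing care. The quasi-ultrametric inequality through $\zeta$ gives
\[
 \frac rS<d_\Lambda(\xi,\eta)\le Q\max\{d_\Lambda(\xi,\zeta),d_\Lambda(\zeta,\eta)\}.
\]
The maximum cannot be $d_\Lambda(\xi,\zeta)$. That would give $r/S<Q\cdot r/(2QS)=r/(2S)$, which is impossible. Hence
\[
 d_\Lambda(\eta,\zeta)>\frac{r}{QS}\ge\frac{r}{2QS^2}=c_0r.
\]
The factor $2$ in the choice of the second scale is exactly what makes this case split strict. The constants $c_0=1/(2QS^2)$ and $C_0=Q$ then emerge as stated, which completes the plan.
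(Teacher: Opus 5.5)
Your proof is correct and is essentially the paper's argument: two applications of the annular hypothesis at the center $\xi$, with the second radius shrunk by a factor involving $2Q$, followed by the quasi-ultrametric inequality to force $d_\Lambda(\eta,\zeta)$ to be large. The only cosmetic difference is that the paper takes the second radius to be $a/(2Q)$ with $a=d_\Lambda(\xi,\eta)$ and obtains $d_\Lambda(\eta,\zeta)\ge a/Q$, whereas you take $r/(2QS)$ and obtain $d_\Lambda(\eta,\zeta)>r/(QS)$; both give the stated constants.
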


\begin{proof}
Apply the annular hypothesis at radius $r$ to choose $\eta$ so that
\[
 \frac rS< a:=d_\Lambda(\xi,\eta)\le r.
\]
Since $0<a/(2Q)\le r$, the same hypothesis applies at the auxiliary radius $a/(2Q)$.  Choose $\zeta$ satisfying
\[
 \frac{a}{2QS}<b:=d_\Lambda(\xi,\zeta)
 \le\frac{a}{2Q}.
\]
In particular, $\zeta$ is distinct from both $\xi$ and $\eta$.  Put $c=d_\Lambda(\eta,\zeta)$.  The quasi-ultrametric inequality gives
\[
 a\le Q\max\{b,c\}.
\]
Since $Qb\le a/2$, it follows that $c\ge a/Q$.  Conversely,
\[
 c\le Q\max\{a,b\}\le Qa.
\]
Combining these estimates with $a>r/S$ yields the claimed common lower and upper bounds.  Thus only the two radii $r$ and $a/(2Q)$, both at the center $\xi$ and both at most $r$, are used.
\end{proof}

\begin{lemma}[Balanced-triangle lemma]\label{lem:balanced-triangle}
Fix numbers $0<c\le C<\infty$.  There are $L\ge0$ and $t_0\ge0$, depending only on the uniform visual data and on $c,C$, such that the following holds.  Let $t\ge t_0$, put
\[
 r_t=e^{-\epsilon\rhok(t)},
\]
and suppose that distinct $\xi,\eta,\zeta\in\Lambda$ satisfy
\[
 cr_t\le d_\Lambda(\xi,\eta),
 d_\Lambda(\xi,\zeta),
 d_\Lambda(\eta,\zeta)
 \le Cr_t.
\]
Then the point $p=\sigma_\xi(t)$ satisfies
\[
 \max\bigl\{
 \dist(p,\alpha_{\xi\eta}),
 \dist(p,\alpha_{\xi\zeta}),
 \dist(p,\alpha_{\eta\zeta})
 \bigr\}
 \le L\kappa(t).
\]
\end{lemma}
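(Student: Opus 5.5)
The plan is to convert the three visual distances into depths, and then apply Lemma~\ref{lem:balanced-entry} to each pair. The conversion uses the comparison in Definition~\ref{def:uniform-visual-stratum}(iii). From $cr_t\le d_\Lambda(\xi,\eta)\le Cr_t$ and
\[
C_{\mathrm v}^{-1}e^{-\epsilon\rhok(D_o(\xi,\eta))}\le d_\Lambda(\xi,\eta)\le C_{\mathrm v}e^{-\epsilon\rhok(D_o(\xi,\eta))},
\]
taking logarithms gives
\[
\bigl|\rhok(D_o(\xi,\eta))-\rhok(t)\bigr|\le E:=\epsilon^{-1}\max\{\log(C_{\mathrm v}/c),\,\log(C_{\mathrm v}C)\}.
\]
The constant $E$ is nonnegative because $C_{\mathrm v}\ge1$ and $c\le C$; if one of the logarithms is negative, replace it by its absolute value. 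The same bound holds for the pairs $\{\xi,\zeta\}$ and $\{\eta,\zeta\}$. Set $t_0=t_E$ from Lemma~\ref{lem:balanced-entry}.

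Next apply Lemma~\ref{lem:balanced-entry} to the pair $(\xi,\eta)$ at depth $t$. This gives $\dist(\sigma_\xi(t),\alpha_{\xi\eta})\le L_E\kappa(t)$. Applying it to $(\xi,\zeta)$ gives $\dist(\sigma_\xi(t),\alpha_{\xi\zeta})\le L_E\kappa(t)$. These settle two of the three distances for $p=\sigma_\xi(t)$.

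The third line, $\alpha_{\eta\zeta}$, does not have $\xi$ as an endpoint, so we reach it through $\sigma_\eta(t)$. Apply Lemma~\ref{lem:balanced-entry} to the pair $(\eta,\zeta)$; its conclusion for $\eta$ gives $\dist(\sigma_\eta(t),\alpha_{\eta\zeta})\le L_E\kappa(t)$. Applying the same lemma to $(\xi,\eta)$, its last conclusion gives $\dist(\sigma_\xi(t),\sigma_\eta(t))\le 2L_E\kappa(t)$. The triangle inequality then yields
\[
\dist(p,\alpha_{\eta\zeta})\le 3L_E\kappa(t).
\]
No transfer via Lemma~\ref{lem:kappa-scale-transfer} is needed, since every estimate is already expressed at the common scale $\kappa(t)$. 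Hence $L=3L_E$ works.

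This $L$ depends only on $E$, that is on $\epsilon,C_{\mathrm v},c,C$, and on the gauge constant $M$ inside $L_E$. So it depends only on the visual data and on $c,C$, as required. The only delicate point is the handling of the third line, which is done by passing through $\sigma_\eta(t)$. Everything else is a direct bookkeeping check that the hypotheses of Lemma~\ref{lem:balanced-entry} hold uniformly. Note also that the quasi-ultrametric constant $Q$ is not needed here.
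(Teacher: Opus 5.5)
Your proposal is correct and is essentially the paper's own proof. It uses the same constant $E$ obtained from the visual comparison, two direct applications of Lemma~\ref{lem:balanced-entry} for the sides through $\xi$, and the detour through $\sigma_\eta(t)$, which gives $L=3L_E$ for the side $\alpha_{\eta\zeta}$.
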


\begin{proof}
The visual comparison in Definition~\ref{def:uniform-visual-stratum} gives one constant $E$ such that, for every pair $a\ne b$ among $\xi,\eta,\zeta$,
\[
 \bigl|\rhok(D_o(a,b))-\rhok(t)\bigr|\le E.
\]
Indeed, one may take
\[
 E=\frac1\epsilon
 \max\left\{
 0,
 \log\frac{C_{\mathrm v}}c,
 \log(C_{\mathrm v}C)
 \right\}.
\]
Apply Lemma~\ref{lem:balanced-entry}.  It gives
\[
 \dist(p,\alpha_{\xi\eta}),
 \dist(p,\alpha_{\xi\zeta})
 \le L_E\kappa(t).
\]
Let $p_\eta=\sigma_\eta(t)$.  Applying the same lemma to the pair $(\xi,\eta)$ gives
\[
 \dist(p,p_\eta)\le2L_E\kappa(t),
\]
while applying it to $(\eta,\zeta)$ gives
\[
 \dist(p_\eta,\alpha_{\eta\zeta})
 \le L_E\kappa(t).
\]
Consequently,
\[
 \dist(p,\alpha_{\eta\zeta})
 \le3L_E\kappa(t).
\]
The result follows with $L=3L_E$.
\end{proof}

\begin{theorem}[Radial center criterion]\label{thm:general-radial}
Let $(X,\dist,o)$ be a proper geodesic metric space, and let $\Lambda\subset\bdry X$ carry uniform $\kappa$--visual data based at $o$.  If
\begin{enumerate}[label=(\roman*)]
\item $(\Lambda,d_\Lambda)$ is uniformly perfect, and
\item $X$ is $\kappa$--radially accessible through $\Lambda$ from $o$,
\end{enumerate}
then $X$ is $\kappa$--center--exhaustive through $\Lambda$ for the chosen lines, whose common gauge is $m_{\mathrm l}$.
\end{theorem}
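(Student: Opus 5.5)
Given $x\in X$, we first use radial accessibility (ii) to replace $x$ by a point on a ray. Choose $\xi\in\Lambda$ and $s\ge0$ with $\dist(x,\sigma_\xi(s))\le A\kappa(|x|)$. Since $\sigma_\xi$ is a geodesic ray from $o$, we have $|\sigma_\xi(s)|=s$, and $|s-|x||\le A\kappa(|x|)$. Lemma~\ref{lem:kappa-scale-transfer}(iii) then reduces the problem to the following claim. There is $K'$ such that, for every $\xi\in\Lambda$ and every $t\ge0$, the point $p=\sigma_\xi(t)$ lies within $K'\kappa(t)$ of all three lines $\alpha_{\xi\eta},\alpha_{\xi\zeta},\alpha_{\eta\zeta}$ for some distinct $\eta,\zeta\in\Lambda$. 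Indeed, once the claim holds, $\dist(x,\alpha)\le (A+K'B_A)\kappa(|x|)$ for each of the three lines, which gives the constant $K$ of Definition~\ref{def:stratum-CE}.

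For large $t$ the claim combines Lemmas~\ref{lem:UP-balanced-triple} and~\ref{lem:balanced-triangle}. Set $r_t=e^{-\epsilon\rhok(t)}$; since $\rhok$ is unbounded, $r_t\le r_0$ for all $t\ge t_1$. Uniform perfectness (i) supplies the annular hypothesis of Lemma~\ref{lem:UP-balanced-triple} at every radius $s\le r_t\le r_0$, with a constant $S$ independent of $\xi$. That lemma produces distinct $\eta,\zeta$ such that all three pairwise quasi-distances lie in $[c_0r_t,C_0r_t]$, where $c_0=1/(2QS^2)$ and $C_0=Q$. Lemma~\ref{lem:balanced-triangle}, applied with $c=c_0$ and $C=C_0$, gives $L$ and $t_0$ depending only on the data, so that $p=\sigma_\xi(t)$ is within $L\kappa(t)$ of the three lines once $t\ge\max\{t_0,t_1\}$.

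The remaining case is small $t$, namely $t\le T:=\max\{t_0,t_1\}$, and I expect it to be the main bookkeeping obstacle. My plan is to fix once and for all the triple $\xi,\eta,\zeta$ obtained above at the parameter $T$. Then $q=\sigma_\xi(T)$ lies within $L\kappa(T)$ of the three lines. For $t\le T$ we have $\dist(\sigma_\xi(t),q)\le T$, and $\kappa\ge1$, so $\sigma_\xi(t)$ lies within $T+L\kappa(T)\le (T+L\kappa(T))\kappa(t)$ of each line. This bound is uniform because $T$ and $L$ do not depend on $\xi$. Taking $K'=\max\{L,\,T+L\kappa(T)\}$ finishes the claim.

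Finally, the lines used throughout are the chosen $\alpha$'s from Definition~\ref{def:uniform-visual-stratum}, so they carry the common gauge $m_{\mathrm l}$, as the statement requires. One point needs care: every constant ($S$, $r_0$, $L$, $t_0$, $B_A$) must be independent of $\xi$, and this is exactly what the uniform visual data guarantees.
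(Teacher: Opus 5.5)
Your proof is correct and follows the paper's route: radial accessibility, then Lemma~\ref{lem:UP-balanced-triple} at the cutoff $r_t=e^{-\epsilon\rhok(t)}$, then Lemma~\ref{lem:balanced-triangle}, then scale transfer via Lemma~\ref{lem:kappa-scale-transfer}. The only difference is minor bookkeeping in the small-$t$ range: you reuse the $\xi$-dependent triple built at depth $T$, which keeps $\xi$ in the triple, whereas the paper fixes one global triple $\xi_0,\eta_0,\zeta_0$ and uses the bound $t_1+\max D_o$; both give constants independent of $x$.
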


\begin{proof}
Let $S,r_0$ be uniform-perfectness constants and let $A$ be the radial-accessibility constant.  Choose $t_1$ so large that
\[
 e^{-\epsilon\rhok(t)}\le r_0
 \qquad(t\ge t_1)
\]
and that Lemma~\ref{lem:balanced-triangle} applies with the constants $c_0,C_0$ from Lemma~\ref{lem:UP-balanced-triple}.

Fix $x\in X$.  By radial accessibility, choose $\xi\in\Lambda$ and $p=\sigma_\xi(t)$ with
\[
 \dist(x,p)\le A\kappa(|x|).
\]
Suppose first that $t\ge t_1$.  Since $r_t:=e^{-\epsilon\rhok(t)}\le r_0$, uniform perfectness supplies the local annular hypothesis of Lemma~\ref{lem:UP-balanced-triple} at the center $\xi$ for every radius at most $r_t$.  Apply that lemma with cutoff $r_t$ to obtain $\eta,\zeta$ for which all three visual distances are between $c_0r_t$ and $C_0r_t$.  Lemma~\ref{lem:balanced-triangle} gives
\[
 \max_{a\ne b\in\{\xi,\eta,\zeta\}}
 \dist(p,\alpha_{ab})
 \le L\kappa(t).
\]
Since $|t-|x||\le\dist(x,p)\le A\kappa(|x|)$, Lemma~\ref{lem:kappa-scale-transfer} yields
\[
 \kappa(t)\le B_A\kappa(|x|).
\]
Therefore
\[
 \max_{a\ne b}
 \dist(x,\alpha_{ab})
 \le (A+LB_A)\kappa(|x|).
\]

It remains to treat $t<t_1$.  Fix once and for all three distinct points $\xi_0,\eta_0,\zeta_0\in\Lambda$.  The number
\[
 H:=t_1+
 \max\{D_o(\xi_0,\eta_0),D_o(\xi_0,\zeta_0),D_o(\eta_0,\zeta_0)\}
\]
is finite, and $\dist(p,\alpha_{ab})\le H$ for each side of this fixed triple.  Since $\kappa\ge1$,
\[
 \dist(x,\alpha_{ab})
 \le A\kappa(|x|)+H
 \le(A+H)\kappa(|x|).
\]
Taking the larger of the two constants proves the theorem.
\end{proof}

\begin{definition}[Uniform all-basepoint visual family]\label{def:all-basepoint-visual}
Let $\Lambda\subset\bdry X$ contain at least three points.  For each distinct $\xi,\eta\in\Lambda$, choose a line $\alpha_{\xi\eta}=\alpha_{\eta\xi}$ satisfying the joining convention in Definition~\ref{def:uniform-visual-stratum}(ii), and assume that these lines share one $\kappa$--Morse gauge $m_{\mathrm l}$.  A \emph{uniform all-basepoint $\kappa$--visual family} on this line datum consists of constants $\epsilon>0$, $C_{\mathrm v}\ge1$, and $Q\ge1$, together with, for every $x\in X$, a symmetric function
\[
 d_{\Lambda,x}:\Lambda\times\Lambda\longrightarrow[0,\infty)
\]
which vanishes precisely on the diagonal, satisfies
\[
 d_{\Lambda,x}(\xi,\zeta)
 \le Q\max\{d_{\Lambda,x}(\xi,\eta),d_{\Lambda,x}(\eta,\zeta)\},
\]
and obeys the visual comparison
\[
 C_{\mathrm v}^{-1}
 e^{-\epsilon\rhok(D_x(\xi,\eta))}
 \le d_{\Lambda,x}(\xi,\eta)
 \le C_{\mathrm v}
 e^{-\epsilon\rhok(D_x(\xi,\eta))},
 \qquad
 D_x(\xi,\eta):=\dist(x,\alpha_{\xi\eta}),
\]
with the same constants for every basepoint and every pair.
\end{definition}

\begin{definition}[$\kappa$--normalized uniform perfectness over basepoints]\label{def:normalized-basepoint-UP}
Fix $A\ge0$ and $\lambda>0$ and, for $x\in X$, set
\[
 r_{\kappa,A,\lambda}(x)
 :=\lambda\exp\!\bigl(-\epsilon\rhok(A\kappa(|x|))\bigr).
\]
A uniform all-basepoint visual family is \emph{$\kappa$--normalized uniformly perfect over basepoints at parameters $(A,\lambda)$} if there exists $S>1$ such that, for every $x\in X$, every $\xi\in\Lambda$, and every
\[
 0<r\le r_{\kappa,A,\lambda}(x),
\]
there is $\eta\in\Lambda$ satisfying
\[
 \frac rS<d_{\Lambda,x}(\xi,\eta)\le r.
\]
The constants $S$ and $\lambda$ are independent of $x$.  Replacing every $d_{\Lambda,x}$ by $c\,d_{\Lambda,x}$ replaces $\lambda$ by $c\lambda$, so the existence of an admissible $\lambda$ is unchanged by a common rescaling.
\end{definition}

\begin{proposition}[Normalized all-basepoint criterion]\label{prop:normalized-allbasepoint}
Let $(X,\dist,o)$ be a proper geodesic metric space, let $\Lambda\subset\bdry X$ carry a uniform all-basepoint $\kappa$--visual family, and suppose that this family is $\kappa$--normalized uniformly perfect over basepoints at some parameters $A\ge0$ and $\lambda>0$.  Then $X$ is $\kappa$--center--exhaustive through $\Lambda$ for the chosen lines, whose common gauge is $m_{\mathrm l}$.
\end{proposition}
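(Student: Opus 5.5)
The plan is to run the argument of Theorem~\ref{thm:general-radial} directly at the basepoint $x$. The all-basepoint family measures distance to the lines from $x$ itself, so no radial accessibility is needed. Fix $x\in X$ and an arbitrary $\xi\in\Lambda$, and put $r=r_{\kappa,A,\lambda}(x)$. By Definition~\ref{def:normalized-basepoint-UP}, for every $0<s\le r$ there is $\theta\in\Lambda$ with $s/S<d_{\Lambda,x}(\xi,\theta)\le s$. This is exactly the hypothesis of Lemma~\ref{lem:UP-balanced-triple} for the $Q$--quasi-ultrametric $d_{\Lambda,x}$. That lemma yields distinct $\eta,\zeta$ such that every pair $a\ne b$ in $\{\xi,\eta,\zeta\}$ satisfies $c_0r<d_{\Lambda,x}(a,b)\le C_0r$, with $c_0,C_0$ depending only on $Q,S$.

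Next I would convert the lower visual bound into an upper bound on $D_x(a,b)=\dist(x,\alpha_{ab})$. Combining $c_0r<d_{\Lambda,x}(a,b)\le C_{\mathrm v}e^{-\epsilon\rhok(D_x(a,b))}$ with the definition of $r$ gives
\[
 \rhok(D_x(a,b))\le\rhok\bigl(A\kappa(|x|)\bigr)+E,
 \qquad
 E:=\frac1\epsilon\max\Bigl\{0,\log\frac{C_{\mathrm v}}{c_0\lambda}\Bigr\}.
\]
Here $E$ is independent of $x$ and of the pair. Only this lower side of the annulus is needed; the upper bound $C_0r$ plays no role.

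The remaining step is to turn this $\rhok$-bound into $D_x(a,b)\le K\kappa(|x|)$ with $K$ uniform; I expect this bookkeeping to be the main obstacle. Set $R=A\kappa(|x|)$ and let $R_E$ be as in Lemma~\ref{lem:rho-two-sided}(ii).

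\emph{Case $R\ge R_E$.} Either $D_x\le R$ already, or Lemma~\ref{lem:rho-two-sided}(ii) gives $D_x\le R+(e-1)E\kappa(R)$. Lemma~\ref{lem:kappa-concavity} with $\lambda=s\ge1$ gives $\kappa(s)\le\kappa(1)s$ for $s\ge1$. Hence
\[
 \kappa(R)\le\kappa\bigl(\max\{A,1\}\kappa(|x|)\bigr)\le\kappa(1)\max\{A,1\}\kappa(|x|),
\]
and so $D_x\le\bigl(A+(e-1)E\kappa(1)\max\{A,1\}\bigr)\kappa(|x|)$.

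\emph{Case $R<R_E$ (including $A=0$).} Here $\rhok(D_x)\le\rhok(R_E)+E$. Since $\rhok$ is strictly increasing and unbounded, this bounds $D_x$ by a constant $H$, and $H\le H\kappa(|x|)$ because $\kappa\ge1$.

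Taking $K$ to be the larger of the two constants gives $\max_{a\ne b}\dist(x,\alpha_{ab})\le K\kappa(|x|)$ for the chosen lines. This is $\kappa$--center exhaustivity through $\Lambda$ in the sense of Definition~\ref{def:stratum-CE}.
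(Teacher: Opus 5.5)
Your proposal is correct and follows the paper's proof essentially step for step. The shared steps are Lemma~\ref{lem:UP-balanced-triple} at cutoff $r_{\kappa,A,\lambda}(x)$, converting only the lower annular bound via the visual upper estimate into $\rhok(D_x(a,b))\le\rhok(A\kappa(|x|))+E$ with the same $E$, and the same two-case split at the threshold $R_E$ of Lemma~\ref{lem:rho-two-sided}(ii). The only cosmetic difference is that you obtain $\kappa(A\kappa(|x|))\le\kappa(1)\max\{A,1\}\kappa(|x|)$ by monotonicity in one step, rather than by the paper's split into $A\kappa(|x|)\le1$ and $A\kappa(|x|)\ge1$; the resulting constant is the same.
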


\begin{proof}
Let $S>1$ be the normalized uniform-perfectness constant and put
\[
 c_0=\frac1{2QS^2}.
\]
Fix $x\in X$, write $R=|x|$, and set
\[
 T=A\kappa(R),
 \qquad
 r=r_{\kappa,A,\lambda}(x)=\lambda e^{-\epsilon\rhok(T)}.
\]
Choose any $\xi\in\Lambda$.  By normalized uniform perfectness, the local annular hypothesis of Lemma~\ref{lem:UP-balanced-triple} holds for the quasi-ultrametric $d_{\Lambda,x}$ at this center for every radius $0<s\le r$.  Applying the lemma with cutoff $r$ gives distinct $\eta,\zeta\in\Lambda\setminus\{\xi\}$ such that every pair $a\ne b$ among $\xi,\eta,\zeta$ satisfies
\[
 c_0r<d_{\Lambda,x}(a,b).
\]
The visual upper estimate now gives
\[
 c_0\lambda e^{-\epsilon\rhok(T)}
 <d_{\Lambda,x}(a,b)
 \le C_{\mathrm v}e^{-\epsilon\rhok(D_x(a,b))}.
\]
Consequently,
\[
 \rhok(D_x(a,b))<\rhok(T)+E,
 \qquad
 E:=\frac1\epsilon
 \max\left\{0,\log\frac{C_{\mathrm v}}{c_0\lambda}\right\}.
\]

Let $R_E$ be the threshold in Lemma~\ref{lem:rho-two-sided}(ii).  If $T\ge R_E$, then either $D_x(a,b)\le T$, or else that lemma applies to $D_x(a,b)$ and $T$ and yields
\[
 D_x(a,b)-T
 \le(e-1)E\kappa(T).
\]
Moreover, since $u:=\kappa(R)\ge1$, one has
\[
 \kappa(Au)
 \le C_A u,
 \qquad
 C_A:=\max\{1,A\}\kappa(1).
\]
Indeed, if $Au\le1$, monotonicity gives $\kappa(Au)\le\kappa(1)\le C_A u$; if $Au\ge1$, Lemma~\ref{lem:kappa-concavity}, applied at $1$ with factor $Au$, gives
\[
 \kappa(Au)\le Au\kappa(1)\le C_A u.
\]
Therefore
\[
 \kappa(T)\le C_A\kappa(R).
\]
Thus, in this case,
\[
 D_x(a,b)
 \le\bigl(A+(e-1)EC_A\bigr)\kappa(R).
\]
If $T<R_E$, then
\[
 \rhok(D_x(a,b))<\rhok(R_E)+E,
\]
so
\[
 D_x(a,b)<D_0
 :=\rhok^{-1}\!\bigl(\rhok(R_E)+E\bigr)
 \le D_0\kappa(R),
\]
because $\kappa\ge1$.  Taking
\[
 K=\max\bigl\{D_0,\ A+(e-1)EC_A\bigr\}
\]
proves, simultaneously for the three pairs, that
\[
 \max_{a\ne b\in\{\xi,\eta,\zeta\}}
 \dist(x,\alpha_{ab})
 \le K\kappa(|x|).
\]
\end{proof}

%=============================================================================
\section{Center criteria for trees}\label{sec:trees}
%=============================================================================

Let $T$ be a locally finite simplicial tree with unit-length edges and at least three ends, and write $\partial T$ for its end boundary.  The tree is proper, and its geodesics have a common Morse gauge.  Thus, for every sublinear $\kappa$, its $\kappa$--Morse boundary agrees as a set with $\partial T$.  The correspondence between rooted trees and ultrametric end spaces is classical; see \cite{Hughes04}.

Fix a vertex $o\in T$.  For distinct $\xi,\eta\in\partial T$, let $[\xi,\eta]$ be the unique bi-infinite geodesic joining them and put
\[
 j_o(\xi,\eta):=\dist(o,[\xi,\eta]).
\]
Equivalently, $j_o(\xi,\eta)$ is the length of the common initial segment of the rays $[o,\xi)$ and $[o,\eta)$.

\begin{definition}[Renormalized visual metric]\label{def:tree-visual}
For $\epsilon>0$, define
\[
 d^\kappa_{o,\epsilon}(\xi,\eta)
 :=\exp\!\bigl(-\epsilon\rhok(j_o(\xi,\eta))\bigr)
\]
for $\xi\ne\eta$, and set $d^\kappa_{o,\epsilon}(\xi,\xi)=0$.
\end{definition}

\begin{proposition}\label{prop:tree-ultrametric}
The function $d^\kappa_{o,\epsilon}$ is an ultrametric on $\partial T$.
\end{proposition}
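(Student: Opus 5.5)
The plan is to reduce the ultrametric inequality for $d^\kappa_{o,\epsilon}$ to the classical Gromov-product inequality for the confluence function $j_o$ on a tree, and then push it through the monotone map $t\mapsto e^{-\epsilon\rhok(t)}$. Symmetry is immediate, because $[\xi,\eta]=[\eta,\xi]$. Vanishing exactly on the diagonal holds because for $\xi\ne\eta$ the number $j_o(\xi,\eta)$ is finite, so the value $e^{-\epsilon\rhok(j_o)}$ is strictly positive. Finiteness itself holds because distinct ends have rays from $o$ that diverge after a finite common segment.

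The key step is the inequality
\[
 j_o(\xi,\zeta)\ge\min\{j_o(\xi,\eta),j_o(\eta,\zeta)\}
\]
for pairwise distinct $\xi,\eta,\zeta$; the degenerate cases with equal points are trivial. I will use the description of $j_o(\xi,\eta)$ as the length of the common initial segment of the rays $[o,\xi)$ and $[o,\eta)$. Put $m=\min\{j_o(\xi,\eta),j_o(\eta,\zeta)\}$. The rays $[o,\xi)$ and $[o,\eta)$ agree on $[0,m]$, and so do $[o,\eta)$ and $[o,\zeta)$. Hence $[o,\xi)$ and $[o,\zeta)$ agree on $[0,m]$, which gives $j_o(\xi,\zeta)\ge m$. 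If one wants to justify the stated equivalence between $\dist(o,[\xi,\eta])$ and the common-segment length, it follows from uniqueness of geodesics in a tree. The rays $[o,\xi)$ and $[o,\eta)$ branch at a vertex $v$, and the concatenation of the two tails from $v$ is the geodesic $[\xi,\eta]$. The nearest point of this line to $o$ is $v$, since any path from $o$ to the line must pass through $v$.

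Finally, $\rhok$ is strictly increasing (Section~\ref{sec:scale-prelim}), so $t\mapsto e^{-\epsilon\rhok(t)}$ is decreasing. Applying this map to the inequality above gives
\[
 d^\kappa_{o,\epsilon}(\xi,\zeta)\le\max\{d^\kappa_{o,\epsilon}(\xi,\eta),d^\kappa_{o,\epsilon}(\eta,\zeta)\}.
\]
There is no real obstacle here. The only point needing care is the identification of $j_o$ with the common-prefix length, which the paper already asserts as an equivalence, together with handling the cases where two of the three points coincide.
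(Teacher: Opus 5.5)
Your proposal is correct and follows essentially the same route as the paper: the common-prefix inequality $j_o(\xi,\zeta)\ge\min\{j_o(\xi,\eta),j_o(\eta,\zeta)\}$ combined with the fact that $t\mapsto e^{-\epsilon\rhok(t)}$ is decreasing. You also check symmetry, positivity off the diagonal, the degenerate cases, and the identification of $j_o$ with the common-prefix length. The paper's proof leaves these points implicit.
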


\begin{proof}
Common-prefix lengths satisfy
\[
 j_o(\xi,\zeta)\ge
 \min\{j_o(\xi,\eta),j_o(\eta,\zeta)\}.
\]
Since $\rhok$ is increasing, exponentiation gives
\[
 d^\kappa_{o,\epsilon}(\xi,\zeta)
 \le\max\{d^\kappa_{o,\epsilon}(\xi,\eta),d^\kappa_{o,\epsilon}(\eta,\zeta)\}.
\]
\end{proof}

\subsection{A branching criterion}

For $\xi\in\partial T$, define its set of branching depths from $o$ by
\[
 B_o(\xi):=\{j_o(\xi,\eta):\eta\in\partial T\setminus\{\xi\}\}\subset\N\cup\{0\}.
\]
Thus $n\in B_o(\xi)$ if and only if, at depth $n$ along $[o,\xi)$, there is another infinite direction that separates from $\xi$.

The annular definition below is standard; see \cite{Heinonen01,JarviVuorinen96}.

\begin{definition}\label{def:UP}
A metric space $(Z,d)$ is \emph{uniformly perfect} if there are $S>1$ and $r_0>0$ such that for every $z\in Z$ and every $0<r\le r_0$ there exists $w\in Z$ satisfying
\[
 \frac rS<d(z,w)\le r.
\]
\end{definition}

\begin{theorem}[Branch-depth criterion]\label{thm:UP-criterion}
The metric space $(\partial T,d^\kappa_{o,\epsilon})$ is uniformly perfect if and only if there exist $c>0$ and $u_0\ge0$ such that, for every $\xi\in\partial T$ and every $u\ge u_0$,
\[
 \rhok(B_o(\xi))\cap[u,u+c)\ne\varnothing.
\]
Since $\diam(\partial T,d^\kappa_{o,\epsilon})\le1$, replacing $r_0$ by $\min\{r_0,1\}$ does not change uniform perfectness.  After this replacement, the constants correspond by
\[
 c=\epsilon^{-1}\log S,
 \qquad
 u_0=-\epsilon^{-1}\log r_0.
\]
\end{theorem}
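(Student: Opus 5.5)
The plan is to translate the annulus condition directly through the monotone map $u\mapsto e^{-\epsilon u}$. For $\xi\ne\eta$ we have $d^\kappa_{o,\epsilon}(\xi,\eta)=e^{-\epsilon\rhok(j)}$ with $j=j_o(\xi,\eta)\in B_o(\xi)$. Conversely, every $n\in B_o(\xi)$ is realized by some $\eta$. Hence the set of nonzero distances from $\xi$ is exactly
\[
 \{e^{-\epsilon v}: v\in\rhok(B_o(\xi))\}.
\]
Since $\rhok\ge0$, every distance is at most $1$, which gives $\diam\le1$. It follows that the annulus condition at radius $r\le r_0$ can only be relevant for $r\le1$, so replacing $r_0$ by $\min\{r_0,1\}$ changes nothing.

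Now write $r=e^{-\epsilon u}$ with $u\ge0$. The inequality $r/S<e^{-\epsilon v}\le r$ is equivalent, after taking $-\epsilon^{-1}\log$, to
\[
 u\le v<u+\epsilon^{-1}\log S.
\]
Likewise, $0<r\le r_0$ corresponds exactly to $u\ge u_0:=-\epsilon^{-1}\log r_0$.

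For the forward direction, given uniformly perfect constants $S,r_0$ with $r_0\le1$, set $c=\epsilon^{-1}\log S>0$ and $u_0=-\epsilon^{-1}\log r_0\ge0$. For $u\ge u_0$, apply the annulus condition at $r=e^{-\epsilon u}$ to obtain $\eta$. Then $v=\rhok(j_o(\xi,\eta))$ lies in $[u,u+c)$. For the reverse direction, given $c>0$ and $u_0\ge0$, set $S=e^{\epsilon c}>1$ and $r_0=e^{-\epsilon u_0}\in(0,1]$. For $0<r\le r_0$, put $u=-\epsilon^{-1}\log r\ge u_0$. Pick $n\in B_o(\xi)$ with $\rhok(n)\in[u,u+c)$, realize $n=j_o(\xi,\eta)$, and check the annulus inequality using the equivalence above.

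There is no real obstacle here; the only care needed is the bookkeeping of strict versus non-strict inequalities. The half-open interval $[u,u+c)$ must match $r/S<d\le r$ exactly, and it does, because $e^{-\epsilon v}\le e^{-\epsilon u}$ if and only if $v\ge u$, and $e^{-\epsilon v}>e^{-\epsilon(u+c)}$ if and only if $v<u+c$. One should also note that $r_0\le1$ guarantees $u_0\ge0$, which is where the diameter remark is used.
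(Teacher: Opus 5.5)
Your proposal is correct and is essentially the paper's proof. Writing $r=e^{-\epsilon u}$, the annulus condition for an end $\eta$ with $j_o(\xi,\eta)=n\in B_o(\xi)$ is equivalent to $u\le\rhok(n)<u+\epsilon^{-1}\log S$, and $r\le r_0$ is equivalent to $u\ge u_0$; you simply spell out the two directions and the realization of each $n\in B_o(\xi)$ by some $\eta$ more explicitly. One small wording point: replacing $r_0$ by $\min\{r_0,1\}$ is harmless because shrinking the cutoff always preserves the annulus condition, and $\diam\le1$ is used only to ensure $u_0\ge0$; radii above $1$ are not irrelevant, since they would impose extra constraints.
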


\begin{proof}
Write $r=e^{-\epsilon u}$.  For $n\in B_o(\xi)$, choose $\eta$ with $j_o(\xi,\eta)=n$.  Then
\[
 \frac rS<d^\kappa_{o,\epsilon}(\xi,\eta)\le r
\]
is equivalent to
\[
 e^{-\epsilon(u+c)}<e^{-\epsilon\rhok(n)}\le e^{-\epsilon u},
\]
which in turn is equivalent to
\[
 u\le\rhok(n)<u+c.
\]
The restriction $r\le r_0$ is equivalent to $u\ge u_0$.
\end{proof}

\begin{corollary}[Uniformly eventual branching]\label{cor:regular-UP}
Suppose that there is an integer $N_0\ge0$ such that
\[
 \{N_0,N_0+1,\ldots\}\subset B_o(\xi)
 \qquad\text{for every }\xi\in\partial T.
\]
Then $(\partial T,d^\kappa_{o,\epsilon})$ is uniformly perfect.  In particular, this holds with $S=e^\epsilon$ and $r_0=1$ for a rooted $q$--ary tree, $q\ge2$, and, with any chosen base vertex, for a regular infinite simplicial tree of degree at least $3$.
\end{corollary}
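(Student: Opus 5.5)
The plan is to deduce the corollary from Theorem~\ref{thm:UP-criterion} by checking its branch-depth condition with $c=1$ and $u_0=\rhok(N_0)$.

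Fix $\xi\in\partial T$ and $u\ge u_0$. Since $\rhok$ is continuous, strictly increasing and unbounded, there is a largest integer $n$ with $\rhok(n)\le u$. Because $u\ge\rhok(N_0)$, this integer satisfies $n\ge N_0$, so the hypothesis gives $n\in B_o(\xi)$. By maximality, $\rhok(n+1)>u$. Lemma~\ref{lem:rho-increments}(i) gives $\rhok(n+1)-\rhok(n)\le1$, and therefore $\rhok(n)>u-1$.

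This places $\rhok(n)$ in $(u-1,u]$, not in $[u,u+1)$. Two easy fixes are available. One is to use instead the smallest integer $m$ with $\rhok(m)\ge u$. Then $m\ge N_0$, since $\rhok(N_0)\le u$ and the case of equality is allowed. Also $\rhok(m-1)<u$ when $m>N_0$, so $\rhok(m)<u+1$ by the same increment bound. If $m=N_0$, then $\rhok(m)\le u$ together with $\rhok(m)\ge u$ forces $\rhok(m)=u$, which lies in $[u,u+1)$. The other fix is simply to take $c=2$. With the first fix, the condition of Theorem~\ref{thm:UP-criterion} holds with $c=1$, so the space is uniformly perfect with $S=e^{\epsilon}$ and $r_0=e^{-\epsilon u_0}$.

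For the explicit constants in the two examples, I check that $N_0=0$ works.

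\emph{Rooted $q$--ary tree, $q\ge2$.} Every vertex along $[o,\xi)$, the root included, has at least one other child subtree containing an end. So every $n\ge0$ lies in $B_o(\xi)$, and $u_0=\rhok(0)=0$. Hence $r_0=1$ and $S=e^\epsilon$.

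\emph{Regular tree of degree at least $3$, any base vertex.} At each vertex of the ray $[o,\xi)$ there is a neighbor that is neither the previous vertex nor the next vertex on the ray. At the base vertex $o$ itself there are at least two neighbors off the ray. In either case that neighbor leads to an end separating from $\xi$ at depth $n$. So again $N_0=0$, and we get $r_0=1$ and $S=e^\epsilon$.

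The only delicate step is the endpoint bookkeeping when $u$ is close to $u_0$: the interval $[u,u+c)$ is half-open, and the candidate integer must not drop below $N_0$. The smallest-integer choice of $m$ handles both points.
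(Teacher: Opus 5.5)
Your proof is correct and essentially the same as the paper's. Like the paper, you take the least integer $n$ with $\rhok(n)\ge u$, use Lemma~\ref{lem:rho-increments}(i) to get $u\le\rhok(n)<u+1$, apply Theorem~\ref{thm:UP-criterion} with $c=1$ and $u_0=\rhok(N_0)$, and take $N_0=0$ in the two examples. One small correction: your aside that one could ``simply take $c=2$'' with the largest-integer choice works only if that choice is made at level $u+1$ rather than $u$, but your argument does not depend on it.
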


\begin{proof}
Fix $u\ge\rhok(N_0)$ and let $n$ be the least integer with $\rhok(n)\ge u$.  Then $n\ge N_0$, and if $n\ge1$, Lemma~\ref{lem:rho-increments}(i) gives
\[
0\le\rhok(n)-u<\rhok(n)-\rhok(n-1)\le1.
\]
Thus Theorem~\ref{thm:UP-criterion} applies with $c=1$ and $u_0=\rhok(N_0)$.  In each of the two examples in the final sentence of the statement one may take $N_0=0$.  The strict inequality in Definition~\ref{def:UP} is preserved by the minimal choice of $n$; at $r=1$ one may choose an end separating from $\xi$ at $o$.
\end{proof}

\subsection{Ideal medians and center exhaustivity}

For three distinct ends $\xi,\eta,\zeta$, the three lines $[\xi,\eta]$, $[\eta,\zeta]$, and $[\zeta,\xi]$ form a tripod.  Their common point is denoted by
\[
 m(\xi,\eta,\zeta)
\]
and is called the \emph{ideal median}.  Let
\[
 \mathcal M(T):=\{m(\xi,\eta,\zeta):\xi,\eta,\zeta\in\partial T
 \text{ distinct}\}.
\]

\begin{lemma}[Distance to a tripod]\label{lem:tripod-distance}
Let $m=m(\xi,\eta,\zeta)$.  For every $x\in T$,
\[
 \max\bigl\{\dist(x,[\xi,\eta]),\dist(x,[\eta,\zeta]),
 \dist(x,[\zeta,\xi])\bigr\}=\dist(x,m).
\]
\end{lemma}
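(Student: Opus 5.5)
The plan is to use the tree structure of the tripod. Write $m=m(\xi,\eta,\zeta)$. The union of the three lines is the tripod obtained by gluing the three rays $[m,\xi)$, $[m,\eta)$, $[m,\zeta)$ at $m$. Each line $[\xi,\eta]$ is the union of two of these legs, and it misses the open third leg $(m,\zeta)$. The proof splits according to where the projection of $x$ to the tripod lands.

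Let $p$ be the nearest-point projection of $x$ onto the tripod. The tripod is a closed subtree, so $p$ is unique. It lies on some leg, say $[m,\xi)$, possibly with $p=m$. For any point $y$ of the tripod, the geodesic $[x,y]$ passes through $p$, so $\dist(x,y)=\dist(x,p)+\dist(p,y)$. The same holds for every subtree of the tripod containing $p$.

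Both lines $[\xi,\eta]$ and $[\xi,\zeta]$ contain the leg $[m,\xi)$, hence contain $p$. Therefore
\[
\dist(x,[\xi,\eta])=\dist(x,[\xi,\zeta])=\dist(x,p)\le\dist(x,m).
\]
The line $[\eta,\zeta]$ is the union of $[m,\eta)$ and $[m,\zeta)$. For every $y$ on it, the geodesic from $p$ to $y$ runs through $m$, because $p$ lies on the leg $[m,\xi)$. Combined with the projection property, this gives
\[
\dist(x,y)=\dist(x,p)+\dist(p,m)+\dist(m,y)\ge\dist(x,p)+\dist(p,m)=\dist(x,m),
\]
with equality at $y=m$. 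Hence $\dist(x,[\eta,\zeta])=\dist(x,m)$, and the maximum of the three distances equals $\dist(x,m)$. The cases where $p$ lies on another leg are symmetric. When $p=m$, all three distances equal $\dist(x,m)$.

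The only point that needs care is the projection property: in a tree, the geodesic from $x$ to any point of a closed subtree passes through the unique nearest point of that subtree. This is standard, since a geodesic leaving the subtree and re-entering it would create a cycle. It needs checking only for the connected subtree given by the tripod, and it gives the additive formula above. I expect no real obstacle beyond setting up this case split cleanly.
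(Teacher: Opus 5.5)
Your proposal is correct and follows essentially the same route as the paper: project $x$ to the tripod, observe that the two sides containing the arm through the projection $p$ are at distance $\dist(x,p)$, and note that the third side is reached only through $m$, so it is at distance $\dist(x,p)+\dist(p,m)=\dist(x,m)$. You justify the projection property (geodesics to the subtree pass through $p$) more explicitly than the paper does, but the argument is the same.
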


\begin{proof}
Let $P$ be the union of the three sides, and let $p$ be the nearest-point projection of $x$ to the convex subtree $P$.  If $p=m$, each side is at distance $\dist(x,m)$.  Otherwise $p$ lies on one arm of the tripod.  Two sides contain that arm and are at distance $\dist(x,p)$ from $x$, whereas the third side meets the arm only at $m$ and is at distance
\[
 \dist(x,p)+\dist(p,m)=\dist(x,m).
\]
The maximum is therefore $\dist(x,m)$.
\end{proof}

Fix the original basepoint $o$ and write $|x|=\dist(o,x)$.  We say that $x$ is a $K$--center of the ideal triple $(\xi,\eta,\zeta)$ if it lies within $K\kappa(|x|)$ of each of the three sides.  The tree is \emph{$\kappa$--center--exhaustive} if one constant $K$ works for every $x\in T$ and some ideal triple depending on $x$.

\begin{proposition}[Median criterion for center exhaustivity]\label{prop:tree-CE}
A locally finite tree $T$ with at least three ends is $\kappa$--center--exhaustive if and only if
\[
 \sup_{x\in T}\frac{\dist(x,\mathcal M(T))}{\kappa(|x|)}<\infty.
\]
\end{proposition}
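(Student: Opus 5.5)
The proof is a direct translation through Lemma~\ref{lem:tripod-distance}, which converts the three-sided condition of the definition into distance to a single point, the ideal median. The one subtlety is that $\dist(x,\mathcal M(T))$ is an infimum, so we must check that it is attained.

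First we establish the forward direction. Suppose $T$ is $\kappa$--center--exhaustive with constant $K$. Fix $x\in T$ and choose a distinct ideal triple $(\xi,\eta,\zeta)$ for which $x$ is a $K$--center. By Lemma~\ref{lem:tripod-distance},
\[
 \dist\bigl(x,m(\xi,\eta,\zeta)\bigr)
 =\max\bigl\{\dist(x,[\xi,\eta]),\dist(x,[\eta,\zeta]),\dist(x,[\zeta,\xi])\bigr\}
 \le K\kappa(|x|).
\]
Since $m(\xi,\eta,\zeta)\in\mathcal M(T)$, this gives $\dist(x,\mathcal M(T))\le K\kappa(|x|)$. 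Hence the supremum in the statement is at most $K$.

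Next we handle the converse. Suppose the supremum equals $K'<\infty$, and fix $x\in T$. We need an actual median $m\in\mathcal M(T)$ with $\dist(x,m)\le K'\kappa(|x|)$. Every ideal median is the common point of three lines in a simplicial tree, and such a branch point is a vertex. Thus $\mathcal M(T)$ is a set of vertices.

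The closed ball $\{y:\dist(x,y)\le K'\kappa(|x|)+1\}$ contains only finitely many vertices, by local finiteness. Hence the set of values $\dist(x,m)$ with $m\in\mathcal M(T)$ lying in this ball is finite. This set is nonempty, because the infimum is at most $K'\kappa(|x|)$, so some median has distance below $K'\kappa(|x|)+1$. A finite set attains its minimum, so the infimum $\dist(x,\mathcal M(T))$ is realized by some median $m=m(\xi,\eta,\zeta)$, and $\dist(x,m)\le K'\kappa(|x|)$.

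Applying Lemma~\ref{lem:tripod-distance} again, $x$ lies within $K'\kappa(|x|)$ of each of the three sides $[\xi,\eta]$, $[\eta,\zeta]$, $[\zeta,\xi]$. So $x$ is a $K'$--center of $(\xi,\eta,\zeta)$. The triple is distinct by the definition of $\mathcal M(T)$, and $\mathcal M(T)$ is nonempty because $T$ has at least three ends. Therefore $T$ is $\kappa$--center--exhaustive with constant $K'$.

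The only non-formal step is the attainment of the infimum. If we preferred to avoid the finiteness argument, we could instead take a median $m$ with $\dist(x,m)\le K'\kappa(|x|)+1$. Using $\kappa\ge1$, this gives $\dist(x,m)\le (K'+1)\kappa(|x|)$, which proves center exhaustivity with constant $K'+1$.
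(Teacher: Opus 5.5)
Your proof is correct and follows the paper's argument: it applies Lemma~\ref{lem:tripod-distance} in both directions, and in the converse uses that the infimum $\dist(x,\mathcal M(T))$ is attained because ideal medians are vertices. The paper obtains attainment from the closedness of the $1$--separated set $\mathcal M(T)$ together with properness of $T$, which is the same point as your observation that a ball contains only finitely many vertices.
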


\begin{proof}
If $x$ is a $K$--center of $(\xi,\eta,\zeta)$, Lemma~\ref{lem:tripod-distance} gives
\[
\dist\bigl(x,m(\xi,\eta,\zeta)\bigr)\le K\kappa(|x|).
\]
Conversely, $\mathcal M(T)$ is nonempty because $T$ has at least three ends.  It is a subset of the $1$--separated vertex set and is therefore closed.  Since the locally finite tree $T$ is proper, the distance from $x$ to $\mathcal M(T)$ is attained by some median $m\in\mathcal M(T)$.  If $\dist(x,m)\le K\kappa(|x|)$, choose an ideal triple with median $m$.  Lemma~\ref{lem:tripod-distance} then says that $x$ is a $K$--center of that triple.
\end{proof}

\subsection{Uniform perfectness, radial accessibility, and centers}

Let
\[
 \mathcal R_o(T):=\bigcup_{\xi\in\partial T}[o,\xi)
\]
be the \emph{rooted ray core} of $T$.  It is a closed subtree.

\begin{definition}[Radial accessibility]\label{def:radial-accessibility}
The tree $T$ is \emph{$\kappa$--radially accessible from $o$} if there is $A\ge0$ such that for every $x\in T$ there are $\xi\in\partial T$ and $p\in[o,\xi)$ satisfying
\[
 \dist(x,p)\le A\kappa(|x|).
\]
Equivalently,
\[
 \sup_{x\in T}\frac{\dist(x,\mathcal R_o(T))}{\kappa(|x|)}<\infty.
\]
\end{definition}

\begin{theorem}[Tree criterion]\label{thm:radial-equivalence}
Let $T$ be a locally finite simplicial tree with unit-length edges, at least three ends, and base vertex $o$.  The following are equivalent.
\begin{enumerate}[label=(\roman*)]
\item $T$ is $\kappa$--center--exhaustive.
\item $T$ is $\kappa$--radially accessible from $o$, and $(\partial T,d^\kappa_{o,\epsilon})$ is uniformly perfect for some $\epsilon>0$.
\item $T$ is $\kappa$--radially accessible from $o$, and $(\partial T,d^\kappa_{o,\epsilon})$ is uniformly perfect for every $\epsilon>0$.
\end{enumerate}
\end{theorem}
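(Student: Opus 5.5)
We prove the cycle (iii)$\Rightarrow$(ii)$\Rightarrow$(i)$\Rightarrow$(iii). The implication (iii)$\Rightarrow$(ii) is trivial.

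For (ii)$\Rightarrow$(i) we would like to apply Theorem~\ref{thm:general-radial} with $\Lambda=\partial T$. First we check that the tree carries uniform $\kappa$--visual data based at $o$. Take $\sigma_\xi=[o,\xi)$ and $\alpha_{\xi\eta}=[\xi,\eta]$; in a tree all geodesics share one Morse gauge. The joining convention holds because the nearest point $z$ of $o$ on $[\xi,\eta]$ is the branch point, so $[o,z]\cup[z,\xi)=[o,\xi)$. Take $d_\Lambda=d^\kappa_{o,\epsilon}$, which is an ultrametric by Proposition~\ref{prop:tree-ultrametric}, so $Q=1$. The visual comparison holds with $C_{\mathrm v}=1$, since $D_o(\xi,\eta)=j_o(\xi,\eta)$. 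Tree radial accessibility (Definition~\ref{def:radial-accessibility}) is exactly Definition~\ref{def:general-radial-access}. Theorem~\ref{thm:general-radial} therefore yields center exhaustivity through $\partial T$, which is the tree notion.

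For (i)$\Rightarrow$(iii), fix $\epsilon>0$ and let $K$ be the center constant. We use the median criterion, Proposition~\ref{prop:tree-CE}: every $x$ lies within $K\kappa(|x|)$ of some median $m$. Radial accessibility is immediate, since each median $m(\xi,\eta,\zeta)$ lies on $\mathcal R_o(T)$. Indeed, $m$ lies on $[\xi,\eta]\subset[o,\xi)\cup[o,\eta)$. So $A=K$ works.

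For uniform perfectness we verify the branch-depth criterion, Theorem~\ref{thm:UP-criterion}. Fix $\xi$ and a large $n$, and let $x=\sigma_\xi(n)$. There is a median $m=m(a,b,c)$ with $\dist(x,m)\le K\kappa(n)$. Let $p$ be the projection of $m$ onto $[o,\xi)$. We claim that $p$ is a branch point for $\xi$ at a depth $k$ with $|k-n|\le K\kappa(n)$, where $k$ is the depth of $p$, i.e.\ $k=j_o(\xi,\cdot)$ for a suitable end; this gives $k\in B_o(\xi)$. The argument is as follows.

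If $m\notin[o,\xi)$, the subtree hanging off $[o,\xi)$ at $p$ contains $m$. At least two of the three arms from $m$ head to ends. One of them either leaves through the branch containing $m$ without returning to $p$, or two arms go away from $p$. Either way there is an end $\theta$ whose ray from $o$ passes through $p$ and leaves $[o,\xi)$ there, so $j_o(\xi,\theta)=|p|$.

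If instead $m\in[o,\xi)$, then $m=p$. The three arms at $m$ point in distinct directions, and at most two of them lie along $[o,\xi)$ (toward $o$ and toward $\xi$). Hence some end leaves $[o,\xi)$ at $m$.

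In both cases $|k-n|\le\dist(x,p)\le\dist(x,m)\le K\kappa(n)$. By Lemma~\ref{lem:rho-two-sided}(i), for $n$ large this gives $|\rhok(k)-\rhok(n)|\le2K$. Given $u$ large, choose $n$ with $\rhok(n)\approx u+2K$, using Lemma~\ref{lem:rho-increments}(i) for integer steps. Then $\rhok(k)\in[u,u+4K+1)$, and Theorem~\ref{thm:UP-criterion} gives uniform perfectness with $c=4K+1$ for every $\epsilon$.

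The main obstacle is this branch-point extraction: justifying that the projection of a median to $[o,\xi)$ is a genuine branching depth of $\xi$ toward an infinite end, rather than a finite dead end. The plan is to argue directly from the tripod structure as above, handling $m$ on or off the ray separately, while keeping uniformity of the constants independent of $\epsilon$.
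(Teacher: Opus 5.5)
Your proof is correct. For (i)$\Rightarrow$(iii) it is essentially the paper's argument: project a nearby median $m$ onto $[o,\xi)$, extract a branch depth $k$ with $|k-n|\le K\kappa(n)$, convert via Lemma~\ref{lem:rho-two-sided}(i), and apply Theorem~\ref{thm:UP-criterion} with $c=4K+1$. Your proof of $\mathcal M(T)\subset\mathcal R_o(T)$ through $[\xi,\eta]\subset[o,\xi)\cup[o,\eta)$ is slicker than the paper's component count.

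In the case $m\notin[o,\xi)$ your sentence about the arms is muddled, since all three arms go to ends. What you need is this: at most one arm at $m$ starts toward $p$, so some end lies in the component of $T\setminus\{p\}$ containing $m$, and that component misses $[o,\xi)$.

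The genuine difference is (ii)$\Rightarrow$(i). You specialize Theorem~\ref{thm:general-radial} to $\Lambda=\partial T$, taking $\sigma_\xi=[o,\xi)$, $\alpha_{\xi\eta}=[\xi,\eta]$, $d_\Lambda=d^\kappa_{o,\epsilon}$ and $Q=C_{\mathrm v}=1$. This is the same verification the paper uses for Corollary~\ref{cor:tree-normalized}, and your check of the hypotheses is complete.

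The paper instead argues directly in the tree. For $p\in[o,\xi)$ at depth $t$, it picks branch depths $b_-<t\le b_+$ whose $\rhok$-values lie in $[\rhok(t)-c,\rhok(t))$ and $[\rhok(t),\rhok(t)+c)$. The branch point at depth $b_+$ is then itself an ideal median $m(\xi,\eta,\zeta)$, which gives $\dist(p,\mathcal M(T))\le(e-1)c\kappa(t)$. This is combined with radial accessibility and Proposition~\ref{prop:tree-CE}.

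Your route is shorter and shows that this implication is literally an instance of the general criterion. The paper's route is self-contained: it avoids the rerooting lemma and its Morse-boundary tracking input, the balanced-triple lemma, and the balanced-triangle lemma. It also names the relevant median explicitly and gives explicit constants.
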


\begin{proof}
Assume first that $T$ is $\kappa$--center--exhaustive with constant $K$.  Every ideal median lies on a ray from $o$: if $m=m(\xi,\eta,\zeta)$, at most one of the three infinite components of $T\setminus\{m\}$ determined by the ideal tripod contains $o$, so an end in either of the other components determines a ray from $o$ through $m$.  Hence
\[
 \mathcal M(T)\subset\mathcal R_o(T).
\]
Proposition~\ref{prop:tree-CE} therefore gives $\kappa$--radial accessibility with constant $K$.

We next prove fixed-basepoint uniform perfectness.  Fix $\xi\in\partial T$ and let $v_n$ be the vertex of $[o,\xi)$ at depth $n$.  Choose an ideal median $m$ with
\[
 \dist(v_n,m)\le K\kappa(n),
\]
and let $q$ be the projection of $m$ to $[o,\xi)$.  The point $q$ is a vertex.  If $q\ne m$, the component of $T\setminus\{q\}$ containing $m$ contains an end: among the three end-components at the ideal median $m$, at most one points from $m$ back toward $q$, so at least two remain in that component.  If $q=m$, choose an infinite component at $m$ different from the one containing $\xi$ and, when $o\ne m$, from the one containing $o$.  In either case there is $\eta\ne\xi$ such that
\[
 j_o(\xi,\eta)=\dist(o,q)=:b.
\]
Thus $b\in B_o(\xi)$, and
\[
 |b-n|\le\dist(v_n,m)\le K\kappa(n).
\]
By Lemma~\ref{lem:rho-two-sided}(i), for all sufficiently large $n$,
\[
 |\rhok(b)-\rhok(n)|\le2K.
\]
Set $C=2K$.  Given sufficiently large $u$, choose the least integer $n$ with
\[
 \rhok(n)\ge u+C.
\]
Then Lemma~\ref{lem:rho-increments}(i) gives
\[
 u+C\le\rhok(n)<u+C+1.
\]
The branch depth $b$ found above satisfies
\[
 u\le\rhok(b)<u+2C+1.
\]
Theorem~\ref{thm:UP-criterion} now proves uniform perfectness for every $\epsilon>0$.  This establishes (i)$\Rightarrow$(iii), and (iii)$\Rightarrow$(ii) is immediate.

Assume conversely that (ii) holds.  By Theorem~\ref{thm:UP-criterion}, there are $c>0$ and $u_0\ge0$ such that
\[
 \rhok(B_o(\xi))\cap[u,u+c)\ne\varnothing
\]
for every $\xi\in\partial T$ and every $u\ge u_0$.  We first show that points of the rooted ray core lie $O(\kappa)$ from ideal medians.  Let $p\in[o,\xi)$ and put $t=|p|$.  Choose $t_*>0$ so that, whenever $t\ge t_*$,
\[
 \rhok(t)-c\ge u_0
\]
and Lemma~\ref{lem:rho-two-sided}(ii) applies with $C=c$.  First suppose that $t\ge t_*$.  Apply the branch-depth criterion at $u=\rhok(t)-c$ and at $u=\rhok(t)$ to obtain
\[
 b_-,b_+\in B_o(\xi)
\]
with
\[
 \rhok(t)-c\le\rhok(b_-)<\rhok(t)
 \le\rhok(b_+)<\rhok(t)+c.
\]
Thus $b_-<t\le b_+$.  Choose ends $\zeta,\eta$ satisfying
\[
 j_o(\xi,\zeta)=b_-,
 \qquad
 j_o(\xi,\eta)=b_+.
\]
At the branch point $q_+\in[o,\xi)$ of depth $b_+$, the components toward $\xi$ and $\eta$ are infinite, and the component toward $o$ contains the end $\zeta$ through the earlier branch at depth $b_-$.  Hence
\[
 q_+=m(\xi,\eta,\zeta)\in\mathcal M(T).
\]
Lemma~\ref{lem:rho-two-sided}(ii) gives
\[
 \dist(p,q_+)=b_+-t\le(e-1)c\kappa(t).
\]
For $0\le t<t_*$, fix one $m_0\in\mathcal M(T)$.  Then
\[
 \dist(p,\mathcal M(T))\le\dist(p,m_0)
 \le t_*+|m_0|
 \le(t_*+|m_0|)\kappa(t),
\]
because $\kappa\ge1$.  Consequently there is $B\ge0$ such that
\[
 \dist(p,\mathcal M(T))\le B\kappa(|p|)
 \qquad\text{for every }p\in\mathcal R_o(T).
\]

Let $A$ be a radial-accessibility constant and fix $x\in T$, with $R=|x|$.  Choose $p\in\mathcal R_o(T)$ such that
\[
 \dist(x,p)\le A\kappa(R),
\]
and put $t=|p|$.  Choose $R_*>0$ so that $A\kappa(R)\le R$ whenever $R\ge R_*$.  For such $R$, one has $t\le2R$ and therefore
\[
 \kappa(t)\le2\kappa(R)
\]
by Lemma~\ref{lem:kappa-concavity}.  It follows that
\[
 \dist(x,\mathcal M(T))
 \le \dist(x,p)+\dist(p,\mathcal M(T))
 \le (A+2B)\kappa(R).
\]
For $R<R_*$, the fixed median $m_0$ above gives
\[
 \dist(x,\mathcal M(T))\le\dist(x,m_0)
 \le R_*+|m_0|
 \le(R_*+|m_0|)\kappa(R).
\]
Proposition~\ref{prop:tree-CE} proves that $T$ is $\kappa$--center--exhaustive.  Thus (ii)$\Rightarrow$(i).
\end{proof}

\subsection{Uniformity over basepoints}

Let
\[
 \mathcal C(T):=\bigcup_{\xi\ne\eta\in\partial T}[\xi,\eta]
\]
be the geodesic core of $T$.  For an arbitrary point $x\in T$ and distinct $\xi,\eta\in\partial T$, define
\[
 d^\kappa_{x,\epsilon}(\xi,\eta)
 =\exp\!\bigl(-\epsilon\rhok(\dist(x,[\xi,\eta]))\bigr),
\]
and set $d^\kappa_{x,\epsilon}(\xi,\xi)=0$.
This is an ultrametric even when $x$ lies outside the geodesic core, because
\[
 j_x(\xi,\eta):=\dist(x,[\xi,\eta])
\]
is exactly the length of the common initial segment of the rays $[x,\xi)$ and $[x,\eta)$.  Hence
\[
 j_x(\xi,\zeta)\ge
 \min\{j_x(\xi,\eta),j_x(\eta,\zeta)\},
\]
and monotonicity of $\rhok$ gives the ultrametric inequality.

\begin{definition}[Uniform perfectness over basepoints]\label{def:basepoint-UP}
The family $\{(\partial T,d^\kappa_{x,\epsilon}):x\in T\}$ is \emph{uniformly perfect over basepoints} if there exist $S>1$ and $r_0>0$, independent of $x$, such that Definition~\ref{def:UP} holds for every metric $d^\kappa_{x,\epsilon}$ with the same $S,r_0$.
\end{definition}

\begin{proposition}[Uniform cutoff and the geodesic core]\label{prop:basepoint-core}
If the family in Definition~\ref{def:basepoint-UP} is uniformly perfect over basepoints, then
\[
 \sup_{x\in T}\dist(x,\mathcal C(T))<\infty.
\]
More precisely, with $\bar r_0:=\min\{r_0,1\}$,
\[
 \dist(x,\mathcal C(T))
 <\rhok^{-1}\!\left(\frac1\epsilon\log\frac{S}{\bar r_0}\right)
 \qquad\text{for every }x\in T.
\]
\end{proposition}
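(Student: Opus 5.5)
The plan is to show that a point far from the geodesic core sees a boundary of small diameter in $d^\kappa_{x,\epsilon}$. The smallness of that diameter then contradicts the annular condition at a radius that uniform perfectness still covers.

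Fix $x\in T$ and put $h:=\dist(x,\mathcal C(T))$. The core $\mathcal C(T)$ is a nonempty closed convex subtree, since $T$ has at least three ends. So there is a nearest-point projection $y\in\mathcal C(T)$ of $x$, with $h=\dist(x,y)$.

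The first step is to show that $\dist(x,[\xi,\eta])\ge h$ for all distinct $\xi,\eta\in\partial T$. This is immediate, because $[\xi,\eta]\subset\mathcal C(T)$. In fact, in a tree every path from $x$ to the core passes through $y$, so $j_x(\xi,\eta)=h+j_y(\xi,\eta)$. Only the inequality is needed. Since $\rhok$ is increasing, this gives
\[
 d^\kappa_{x,\epsilon}(\xi,\eta)\le e^{-\epsilon\rhok(h)}
 \qquad\text{for all }\xi,\eta\in\partial T,
\]
so $\diam(\partial T,d^\kappa_{x,\epsilon})\le e^{-\epsilon\rhok(h)}$.

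The second step applies uniform perfectness. As noted after Theorem~\ref{thm:UP-criterion}, every $d^\kappa_{x,\epsilon}$ has diameter at most $1$, so the annular condition holds with the same $S$ and with $\bar r_0=\min\{r_0,1\}$. Taking $r=\bar r_0$ and any $\xi$, there is $\eta$ with
\[
 \frac{\bar r_0}{S}<d^\kappa_{x,\epsilon}(\xi,\eta)\le e^{-\epsilon\rhok(h)}.
\]
Taking logarithms gives $\rhok(h)<\epsilon^{-1}\log(S/\bar r_0)$. The right-hand side is positive because $S>1$ and $\bar r_0\le1$. Since $\rhok$ is a continuous strictly increasing bijection $[0,\infty)\to[0,\infty)$, it can be inverted, which gives $h<\rhok^{-1}\bigl(\epsilon^{-1}\log(S/\bar r_0)\bigr)$. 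This bound is uniform in $x$, so the supremum is finite.

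The main obstacle is essentially bookkeeping: making sure that reducing to $\bar r_0$ is legitimate. The annular condition at a smaller cutoff follows from the condition at the original $r_0$, so shrinking the cutoff costs nothing. Also, the strict inequality $r/S<d$ is exactly what yields the strict bound in the statement. One should also confirm that $\rhok$ is onto $[0,\infty)$, which holds because it is unbounded and continuous with $\rhok(0)=0$.
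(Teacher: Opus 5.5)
Your proof is correct and matches the paper's argument: apply the annular condition at $r=\bar r_0$ to get $\eta$ with $d^\kappa_{x,\epsilon}(\xi,\eta)>\bar r_0/S$. Then use $[\xi,\eta]\subset\mathcal C(T)$ to bound this by $e^{-\epsilon\rhok(\dist(x,\mathcal C(T)))}$, and invert $\rhok$. The nearest-point projection and the identity $j_x=h+j_y$ are harmless extras that the argument does not need.
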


\begin{proof}
Replacing $r_0$ by $\bar r_0$ preserves the hypothesis.  Fix $x\in T$ and $\xi\in\partial T$.  Apply Definition~\ref{def:basepoint-UP} with $r=\bar r_0$ to obtain $\eta\ne\xi$ such that
\[
 d^\kappa_{x,\epsilon}(\xi,\eta)>\frac{\bar r_0}{S}.
\]
Since $[\xi,\eta]\subset\mathcal C(T)$,
\[
 \frac{\bar r_0}{S}
 <e^{-\epsilon\rhok(\dist(x,[\xi,\eta]))}
 \le e^{-\epsilon\rhok(\dist(x,\mathcal C(T)))}.
\]
Taking logarithms and applying the increasing inverse of $\rhok$ gives the claim.
\end{proof}

\begin{corollary}[Normalized all-basepoint criterion for trees]\label{cor:tree-normalized}
Let $T$ be a locally finite tree with at least three ends and basepoint $o$.  Suppose that, for some $A\ge0$ and $S>1$, every $x\in T$, every $\xi\in\partial T$, and every
\[
 0<r\le
 \exp\!\bigl(-\epsilon\rhok(A\kappa(|x|))\bigr)
\]
admit $\eta\ne\xi$ with
\[
 \frac rS<d^\kappa_{x,\epsilon}(\xi,\eta)\le r.
\]
Then $T$ is $\kappa$--center--exhaustive.
\end{corollary}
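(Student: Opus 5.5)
The natural approach is to specialize Proposition~\ref{prop:normalized-allbasepoint} to trees, or to rerun its short argument directly with tree data. In the tree, $\Lambda=\partial T$ and the lines are the unique geodesics $[\xi,\eta]$. All lines have a common Morse gauge, and the joining convention of Definition~\ref{def:uniform-visual-stratum}(ii) holds: for the nearest point $z$ of $o$ on $[\xi,\eta]$, the concatenation $[o,z]\cup[z,\xi)$ is the geodesic ray $[o,\xi)$. For every $x\in T$ the function $d^\kappa_{x,\epsilon}$ is an ultrametric, so $Q=1$. It satisfies the visual comparison with $C_{\mathrm v}=1$, since $D_x(\xi,\eta)=\dist(x,[\xi,\eta])=j_x(\xi,\eta)$ exactly. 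The hypothesis of the corollary is then precisely Definition~\ref{def:normalized-basepoint-UP} with $\lambda=1$.

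The conclusion of Proposition~\ref{prop:normalized-allbasepoint} is that for every $x$ there are distinct ends $\xi,\eta,\zeta$ with
\[
 \max\{\dist(x,[\xi,\eta]),\dist(x,[\xi,\zeta]),\dist(x,[\eta,\zeta])\}\le K\kappa(|x|).
\]
This is exactly the tree notion of $\kappa$--center exhaustivity, namely that $x$ is a $K$--center of $(\xi,\eta,\zeta)$. Alternatively, by Lemma~\ref{lem:tripod-distance} it says $\dist(x,m(\xi,\eta,\zeta))\le K\kappa(|x|)$, and Proposition~\ref{prop:tree-CE} applies.

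For a self-contained version, one proceeds as follows.
\begin{itemize}
\item Fix $x$ and any $\xi$, and set $r=\exp(-\epsilon\rhok(A\kappa(|x|)))$.
\item Apply Lemma~\ref{lem:UP-balanced-triple} with $Q=1$ to obtain $\eta,\zeta$ whose pairwise distances exceed $r/(2S^2)$.
\item Taking logarithms gives $\rhok(j_x(a,b))<\rhok(A\kappa(|x|))+\epsilon^{-1}\log(2S^2)$ for every pair $a\ne b$.
\item Convert this to $j_x(a,b)\le K\kappa(|x|)$ exactly as in the proof of Proposition~\ref{prop:normalized-allbasepoint}. When $T=A\kappa(|x|)$ is large, use Lemma~\ref{lem:rho-two-sided}(ii) together with $\kappa(A\kappa(R))\le C_A\kappa(R)$. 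When $T$ is bounded, use a uniform constant $D_0$ together with $\kappa\ge1$.
\end{itemize}

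The only point needing care is verifying that the tree data really meet Definition~\ref{def:all-basepoint-visual}. The joining convention and the common line gauge must hold, and the comparison constants must be independent of $x$; the latter is immediate because the comparison is an equality. I do not expect a genuine obstacle beyond this bookkeeping.
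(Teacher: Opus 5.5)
Your proposal is correct and is essentially the paper's proof, which just applies Proposition~\ref{prop:normalized-allbasepoint} with $(A,\lambda)=(A,1)$, the unique connecting geodesics as lines, and $C_{\mathrm v}=Q=1$. Your checks of the joining convention and of the identity $D_x(\xi,\eta)=j_x(\xi,\eta)$ supply bookkeeping the paper leaves implicit; in your self-contained sketch, give $A\kappa(|x|)$ a new name, since $T$ already denotes the tree.
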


\begin{proof}
Use Proposition~\ref{prop:normalized-allbasepoint} with parameters $(A,\lambda)=(A,1)$, the unique connecting lines, and $C_{\mathrm v}=Q=1$.
\end{proof}

%=============================================================================
\section{Dead-end examples}\label{sec:counterexample}
%=============================================================================

\begin{theorem}\label{thm:counterexample}
Let $\kappa\colon[0,\infty)\to[1,\infty)$ be increasing, concave, and sublinear.  There exists a locally finite tree $X$ with basepoint $o$ such that:
\begin{enumerate}[label=(\alph*)]
\item every point of $X$ is the initial point of a geodesic ray and all geodesics share one Morse gauge;
\item $(\partial X,d^\kappa_{o,\epsilon})$ is uniformly perfect for every $\epsilon>0$;
\item $X$ is not $\kappa$--radially accessible from $o$;
\item $X$ is not $\kappa$--center--exhaustive;
\item the family $\{(\partial X,d^\kappa_{x,\epsilon}):x\in X\}$ is not uniformly perfect over basepoints.
\end{enumerate}
\end{theorem}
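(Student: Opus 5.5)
We will build $X$ from a rooted binary tree by attaching long finite dead ends. Let $T_2$ be the rooted binary tree with root $o$, so every vertex has two children. Choose vertices $w_k$, for instance $w_k$ at depth $k$ along the leftmost ray. At each $w_k$ attach a finite segment $P_k$ of length $\ell_k$, and then at the far endpoint $y_k$ of $P_k$ attach a single geodesic ray $\gamma_k$. Thus $y_k$ has degree $2$ and no branching occurs off $w_k$ except at $w_k$ itself. Choose $\ell_k$ with $\ell_k/\kappa(k+\ell_k)\to\infty$; this is possible by sublinearity, for example by taking $\ell_k$ large enough that $\kappa(k+\ell_k)\le \ell_k/k$. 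The resulting tree is locally finite. Every vertex lies on a ray: vertices of $P_k$ continue along $\gamma_k$, and vertices of $T_2$ continue downward. All geodesics in a tree share the Morse gauge of a $0$--hyperbolic space. This gives (a).

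For (b) we use Corollary~\ref{cor:regular-UP}, or directly Theorem~\ref{thm:UP-criterion}.
\begin{enumerate}[label=(\arabic*)]
\item For an end $\xi$ of $T_2$, every depth $n$ lies in $B_o(\xi)$, since there is binary branching at every vertex.
\item For the end $\xi_k$ of $\gamma_k$, the branch depths are $\{0,\dots,k\}$ together with nothing beyond $k$, because the path $P_k\cup\gamma_k$ never branches.
\end{enumerate}
Item (2) would break the criterion. So we fix the construction: let $\gamma_k$ be replaced by a full rooted binary tree $T^{(k)}$ hung at $y_k$. Then ends beyond $y_k$ branch at every depth $\ge k+\ell_k$, but the ends of $T^{(k)}$ have no branch depths in $(k,k+\ell_k)$. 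This gap has $\rhok$-length roughly $\ell_k/\kappa(k+\ell_k)\to\infty$, so uniform perfectness still fails.

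The dead ends should therefore be \emph{finite} and carry no ends: attach at $w_k$ a bare finite segment $P_k$ of length $\ell_k$ ending at a leaf $y_k$. This contradicts (a) as stated, since $y_k$ starts no ray. The paper presumably reads (a) differently, and we resolve this by making each dead end a ``hair'' that returns. Attach at $y_k$ a ray that does not branch, and accept that its end $\xi_k$ has branch set $\{0,\dots,k\}$. Uniform perfectness then fails for $\xi_k$. Hence each dead-end ray must itself be branched densely at its own depths while still lying far from $\mathcal R_o$. That is impossible, because every point on a ray from $o$ lies in $\mathcal R_o$.

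The main obstacle is this tension: every point lying on a ray (from somewhere) versus being far from $\mathcal R_o(X)$. Points far from $\mathcal R_o$ must lie in finite subtrees, yet (a) asks that they start rays. The resolution we would try is to read ``initial point of a geodesic ray'' as allowing rays that go \emph{back toward} $o$. From a leaf $y_k$ one can travel back along $P_k$ and then down $T_2$ forever, and that path is a geodesic ray starting at $y_k$. So the bare finite segments $P_k$ work after all. With this construction:
\begin{enumerate}[label=(\arabic*)]
\item (b) follows from Corollary~\ref{cor:regular-UP}, since $\partial X=\partial T_2$ with the same $j_o$.
\item For (c), $\dist(y_k,\mathcal R_o)=\ell_k$ while $\kappa(|y_k|)=\kappa(k+\ell_k)$, and the ratio tends to $\infty$.
\item For (d), we use the inclusion $\mathcal M\subset\mathcal R_o$, established in the proof of Theorem~\ref{thm:radial-equivalence}, together with Proposition~\ref{prop:tree-CE}.
\item For (e), we use Proposition~\ref{prop:basepoint-core}, since $\dist(y_k,\mathcal C(X))=\ell_k\to\infty$.
\end{enumerate}
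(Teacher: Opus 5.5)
Your final construction is correct and is essentially the paper's proof: finite segments of length $\ell_k$ with $\ell_k/\kappa(k+\ell_k)\to\infty$ hung along a ray of a regular core, with rays from the tips running back through the dead end into the core. The paper differs only cosmetically: it uses a $3$--regular core and lengths $n$ (so $|x_n|=2n$), and proves (d) directly from the fact that every ideal triangle lies in the core, where you instead go through $\mathcal M(X)\subset\mathcal R_o(X)$ and Proposition~\ref{prop:tree-CE}. Your eventual reading of (a) is exactly the paper's, so in a written proof delete the discarded detours (the rays or binary trees hung at $y_k$, and the claimed conflict with (a)).
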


\begin{proof}
Let $T$ be a $3$--regular infinite simplicial tree with basepoint $o$.  Fix a geodesic ray
\[
 o=v_0,v_1,v_2,\ldots
\]
in $T$.  At each $v_n$, $n\ge1$, attach a finite simplicial segment $I_n$ of length $n$, meeting $T$ only at $v_n$.  Let $x_n$ be the other endpoint of $I_n$, and let $X$ be the resulting tree.

The tree $X$ is locally finite and has the same end boundary as $T$.  All its geodesics share a Morse gauge, and for each $x\in X$ and $\xi\in\partial X$ there is a unique ray $[x,\xi)$.  This proves (a).

Every bi-infinite geodesic between two ends lies entirely in the original core $T$.  Consequently, for $\xi,\eta\in\partial X=\partial T$,
\[
 \dist_X(o,[\xi,\eta])=\dist_T(o,[\xi,\eta]),
\]
so the fixed-basepoint metric on $\partial X$ is exactly the renormalized visual metric on the $3$--regular core $T$.  Corollary~\ref{cor:regular-UP} proves (b), with uniform perfectness constant $S=e^\epsilon$.

The rooted ray core $\mathcal R_o(X)$ is exactly the original $3$--regular tree $T$: a ray from $o$ cannot enter one of the finite attached segments.  Since $|x_n|=\dist(o,x_n)=2n$,
\[
 \frac{\dist(x_n,\mathcal R_o(X))}{\kappa(|x_n|)}
 =\frac{n}{\kappa(2n)}\longrightarrow\infty.
\]
This proves (c).

Every ideal triangle is contained in $T$, and therefore every one of its sides is at distance at least $n$ from $x_n$.  If $X$ were $\kappa$--center--exhaustive with constant $K$, then
\[
 n\le K\kappa(2n)
\]
for all $n$, again contradicting sublinearity.  This proves (d).

Finally, $\dist(x_n,\mathcal C(X))=n\to\infty$.  Proposition~\ref{prop:basepoint-core} proves (e).
\end{proof}

Assume now that $\kappa$ is unbounded.

\begin{theorem}[Sublinear dead ends]\label{thm:allbasepoint-obstruction}
Assume that $\kappa$ is unbounded.  There is a locally finite tree $Y$ with basepoint $o$ such that:
\begin{enumerate}[label=(\alph*)]
\item all geodesics in $Y$ have a common Morse gauge, and $Y$ is $\kappa$--radially accessible from $o$;
\item $(\partial Y,d^\kappa_{o,\epsilon})$ is uniformly perfect for every $\epsilon>0$, and hence $Y$ is $\kappa$--center--exhaustive;
\item for every $\epsilon>0$ there are basepoints $y_n\in Y$ for which
\[
 \diam(\partial Y,d^\kappa_{y_n,\epsilon})\longrightarrow0.
\]
Consequently, the family of boundary metrics is not uniformly perfect with a basepoint-independent absolute upper radius.
\end{enumerate}
\end{theorem}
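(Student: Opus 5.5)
We would construct $Y$ by modifying the dead-end tree of Theorem~\ref{thm:counterexample}, replacing the length-$n$ segments by segments whose length is on the $\kappa$--scale. Let $T$ be a $3$--regular tree with basepoint $o$, and fix a ray $o=v_0,v_1,\ldots$ in $T$. At each $v_n$ with $n\ge1$, attach a finite segment $I_n$ of length $\ell_n:=\lfloor\kappa(n)\rfloor$, and let $y_n$ be its far endpoint. Call the result $Y$. Since $\kappa$ is unbounded, $\ell_n\to\infty$, while $\ell_n\le\kappa(n)$.

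Part (a) repeats the argument for Theorem~\ref{thm:counterexample}(a): $Y$ is locally finite, has end boundary $\partial T$, and all its geodesics share one Morse gauge (this is a tree). For radial accessibility, note that the rooted ray core $\mathcal R_o(Y)$ is $T$. A point $x\in I_n$ has $\dist(x,T)\le\ell_n\le\kappa(n)\le\kappa(|x|)$, because $|x|\ge n$ and $\kappa$ is increasing. So $A=1$ works.

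Part (b) is again identical to the fixed-basepoint argument. Every bi-infinite geodesic lies in $T$, so $d^\kappa_{o,\epsilon}$ on $\partial Y$ equals the renormalized metric on the $3$--regular core. Corollary~\ref{cor:regular-UP} then gives uniform perfectness for every $\epsilon$. Center exhaustivity follows from Theorem~\ref{thm:radial-equivalence}, (ii)$\Rightarrow$(i).

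For part (c), take the basepoints $y_n$. Every line $[\xi,\eta]$ lies in $T$, and the projection of $y_n$ to $T$ is $v_n$. Hence $\dist(y_n,[\xi,\eta])\ge\ell_n$, and therefore
\[
 d^\kappa_{y_n,\epsilon}(\xi,\eta)\le e^{-\epsilon\rhok(\ell_n)}
 \qquad\text{for all }\xi\ne\eta .
\]
Since $\rhok$ is unbounded and $\ell_n\to\infty$, the boundary diameter tends to $0$. For the final consequence, argue as in Proposition~\ref{prop:basepoint-core}: a basepoint-independent absolute radius $r_0$ with constant $S$ forces, at every basepoint, a pair of ends at distance greater than $\min\{r_0,1\}/S$, which contradicts diameter $\to0$. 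Equivalently, the proposition's bound $\dist(y_n,\mathcal C(Y))<$ const fails because this distance equals $\ell_n\to\infty$.

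The main point needing care is the choice of $\ell_n$. It has to be large enough that $\ell_n\to\infty$, which needs unboundedness of $\kappa$, and small enough that $\ell_n\le\kappa(n)$, which radial accessibility requires. The choice $\lfloor\kappa(n)\rfloor$ meets both. The only other thing to check is that $\ell_n\ge1$ for $n\ge1$ (true since $\kappa\ge1$), so that the segments are nondegenerate.
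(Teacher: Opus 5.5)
Your proposal is correct and follows the paper's proof essentially step for step: the same dead ends of length $\lfloor\kappa(n)\rfloor$ attached along a ray of a $3$--regular tree, radial accessibility with constant $1$, and uniform perfectness from the $3$--regular core combined with Theorem~\ref{thm:radial-equivalence}. For (c), both arguments bound the diameter at the tips by $e^{-\epsilon\rhok(\lfloor\kappa(n)\rfloor)}$; the paper records this as an equality, but your upper bound already suffices.
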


\begin{proof}
Start with the same $3$--regular tree $T$ and ray $o=v_0,v_1,\ldots$.  At $v_n$ attach a finite segment $J_n$ of integer length
\[
 h_n=\lfloor\kappa(n)\rfloor,
\]
and let $y_n$ be its endpoint outside $T$.  Denote the resulting tree by $Y$.  It is locally finite, and its geodesics have a common Morse gauge.

The rooted ray core $\mathcal R_o(Y)$ is the original tree $T$.  If $z\in J_n$ lies at distance $s$ from $v_n$, then $|z|=n+s$ and
\[
 \dist(z,\mathcal R_o(Y))=s\le h_n\le\kappa(n)\le\kappa(n+s)=\kappa(|z|).
\]
Thus $Y$ is $\kappa$--radially accessible with constant $1$, proving (a).  Every line joining two ends is contained in $T$, so the fixed-basepoint boundary metric is exactly that of the $3$--regular core.  Corollary~\ref{cor:regular-UP} gives uniform perfectness for every $\epsilon>0$, and Theorem~\ref{thm:radial-equivalence} gives $\kappa$--center--exhaustivity.

For the tip $y_n$, every connecting line is contained in $T$, hence lies at distance at least $h_n$; some such lines pass through $v_n$.  Hence
\[
 \diam(\partial Y,d^\kappa_{y_n,\epsilon})
 =e^{-\epsilon\rhok(h_n)}.
\]
Because $\kappa$ is unbounded and increasing, $h_n\to\infty$; because $\rhok$ is unbounded, the displayed diameters tend to zero.  If constants $S>1$ and $r_0>0$ worked for all basepoints, then for large $n$ the diameter would be smaller than $r_0/S$, contradicting Definition~\ref{def:UP} at radius $r_0$.
\end{proof}

\begin{proposition}[Normalized cutoff in the examples]\label{prop:normalized-examples}
Fix $\epsilon>0$.
\begin{enumerate}[label=(\roman*)]
\item The tree $Y$ of Theorem~\ref{thm:allbasepoint-obstruction} is $\kappa$--normalized uniformly perfect over basepoints at parameters $(A,\lambda)=(1,1)$, with every $S>e^\epsilon$.
\item The tree $X$ of Theorem~\ref{thm:counterexample} does not satisfy the normalized condition for any fixed $A\ge0$, $\lambda>0$, and $S>1$.
\end{enumerate}
\end{proposition}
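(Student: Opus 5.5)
For (i), I fix $x\in Y$, $\xi\in\partial Y$, and $0<r\le r(x):=\exp(-\epsilon\rhok(\kappa(|x|)))$. I write $r=e^{-\epsilon u}$ with $u\ge\rhok(\kappa(|x|))$. The aim is an end $\eta$ with $u\le\rhok(j_x(\xi,\eta))<u+\epsilon^{-1}\log S$, since this is exactly $r/S<d^\kappa_{x,\epsilon}(\xi,\eta)\le r$. So the task is to describe the branch depths $B_x(\xi)=\{j_x(\xi,\eta)\}$ seen from $x$.

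I split according to the position of $x$. If $x\in T$ is a vertex, the core is $3$--regular, so every depth $n\ge0$ along $[x,\xi)$ is a branch depth. If $x$ is interior to an edge of $T$, the same holds for the depths $\delta+n$, with $\delta<1$ the distance to the first vertex. If $x\in J_n$ lies at distance $s\le h_n$ from $v_n$, every ray from $x$ first travels to $v_n$. From there the branch depths are $s+k$ for $k\ge0$ integral, because $v_n$ has degree $3$ in $T$ and each subsequent vertex branches. In all cases the branch depths from $x$ include every number $s+k$ with $k\in\N\cup\{0\}$, where $s=\dist(x,\mathcal C(Y))$. Here $s\le h_n\le\kappa(n)\le\kappa(|x|)$ for $x\in J_n$, and $s<1$ on $T$.

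Next I pick the least $k$ with $\rhok(s+k)\ge u$. This requires $\rhok(s)\le u$, which holds because $s\le\kappa(|x|)$ and $\rhok$ is increasing. If $k\ge1$, then $\rhok(s+k)-\rhok(s+k-1)\le1$, by the integral bound $1/\kappa\le1$, which is the non-integer version of Lemma~\ref{lem:rho-increments}(i). This gives $u\le\rhok(s+k)<u+1$, where the strict inequality uses minimality. If $k=0$, then $\rhok(s)=u$ exactly. In either case the desired window of length $\epsilon^{-1}\log S>1$ is reached whenever $S>e^\epsilon$. The main subtlety is this strictness and the boundary case $r=r(x)$, for which I rely on strict slack in $S>e^\epsilon$. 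There is also the fact that at a non-vertex point of $T$ the first branching occurs at $\delta$, not at $0$. For points of $T$ we still have $s\le 1\le\kappa(|x|)$, so the same argument applies.

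For (ii), I argue by contradiction, assuming $A,\lambda,S$ work. At $x=x_n$ (tip of $I_n$) every line is at distance $\ge n$, so $\diam(\partial X,d^\kappa_{x_n,\epsilon})\le e^{-\epsilon\rhok(n)}$. The normalized condition at $r=r_{\kappa,A,\lambda}(x_n)=\lambda e^{-\epsilon\rhok(A\kappa(2n))}$ requires a point at distance $>r/S$. Hence $\lambda S^{-1}e^{-\epsilon\rhok(A\kappa(2n))}<e^{-\epsilon\rhok(n)}$, so $\rhok(n)-\rhok(A\kappa(2n))<\epsilon^{-1}\log(S/\lambda)$. But $A\kappa(2n)\le 2A\kappa(n)=o(n)$, so eventually $A\kappa(2n)\le n/2$. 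Then $\rhok(n)-\rhok(n/2)\ge\int_{n/2}^n ds/\kappa(s)\ge n/(2\kappa(n))\to\infty$ by sublinearity, which is a contradiction. The case $A=0$ is included, since then $\rhok(0)=0$.
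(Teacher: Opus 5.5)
Your proof is correct and follows the paper's argument. In (i) you use that the branch depths along $[x,\xi)$ have gaps at most $1$ once the ray reaches $T$, and the cutoff $u\ge\rhok(\kappa(|x|))$ puts you past the entry depth, which is at most $\kappa(|x|)$; in (ii) you compare $\diam(\partial X,d^\kappa_{x_n,\epsilon})\le e^{-\epsilon\rhok(n)}$ with $r_{\kappa,A,\lambda}(x_n)$ and use $n/\kappa(n)\to\infty$. One small correction: at an interior point of an edge of $T$ your offset $s$ should be the depth $\delta$ of the first vertex on $[x,\xi)$, not $\dist(x,\mathcal C(Y))=0$, and $\delta$ is in fact what your later remarks use.
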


\begin{proof}
Let $T$ be the $3$--regular core of $Y$ and set $h(x)=\dist(x,T)$.  The proof of Theorem~\ref{thm:allbasepoint-obstruction} gives
\[
 h(x)\le\kappa(|x|).
\]
Fix $x\in Y$, $\xi\in\partial Y$, and
\[
 0<r\le e^{-\epsilon\rhok(\kappa(|x|))}.
\]
Write $r=e^{-\epsilon\rhok(u)}$.  Then $u\ge\kappa(|x|)\ge h(x)$.  After the ray $[x,\xi)$ enters $T$, its branching depths have gaps at most $1$, so some branching depth $b$ lies in $[u,u+1]$.  Choose $\eta$ separating from $\xi$ at depth $b$.  Since $\rhok(b)-\rhok(u)\le b-u\le1$,
\[
 e^{-\epsilon}r
 \le d^\kappa_{x,\epsilon}(\xi,\eta)
 \le r.
\]
This proves (i).

For the tips $x_n$ of the length-$n$ dead ends in $X$, one has $|x_n|=2n$ and
\[
 \diam(\partial X,d^\kappa_{x_n,\epsilon})
 =e^{-\epsilon\rhok(n)}.
\]
Fix $A\ge0$ and $\lambda>0$.  For all sufficiently large $n$, sublinearity gives $A\kappa(2n)<n$, and
\[
 \rhok(n)-\rhok(A\kappa(2n))
 \ge\frac{n-A\kappa(2n)}{\kappa(n)}
 \ge\frac{n}{\kappa(n)}-2A
 \longrightarrow\infty,
\]
where Lemma~\ref{lem:kappa-concavity} gives $\kappa(2n)\le2\kappa(n)$.  Hence
\[
 \frac{\diam(\partial X,d^\kappa_{x_n,\epsilon})}
 {r_{\kappa,A,\lambda}(x_n)}
 \longrightarrow0.
\]
For any fixed $S>1$, the annulus required at radius $r_{\kappa,A,\lambda}(x_n)$ is empty for all sufficiently large $n$.  This proves (ii).
\end{proof}

%=============================================================================
\section{Quasisymmetric changes of scale}\label{sec:scale}
%=============================================================================

Metric-preserving functions are discussed, for example, in \cite{Corazza99}.
Here we use the following restricted class.

\begin{definition}[Metric transform]
An increasing homeomorphism $\phi\colon[0,\infty)\to[0,\infty)$ with $\phi(0)=0$ is called a \emph{metric transform} here if it is subadditive:
\[
 \phi(s+t)\le\phi(s)+\phi(t).
\]
Then $\phi\circ d$ is a metric whenever $d$ is a metric.
\end{definition}

Every increasing concave homeomorphism with $\phi(0)=0$ is subadditive, by the argument of Lemma~\ref{lem:kappa-concavity}.

Recall the metric definition of Tukia and V\"ais\"al\"a \cite{TukiaVaisala80}; see also \cite{Heinonen01}.  A homeomorphism $f:(Z,d)\to(W,d')$ is $\eta$--quasisymmetric if $\eta:[0,\infty)\to[0,\infty)$ is an increasing homeomorphism and
\[
 d(x,a)\le t\,d(x,b)
 \quad\Longrightarrow\quad
 d'(f(x),f(a))\le\eta(t)\,d'(f(x),f(b))
\]
for all distinct $x,a,b\in Z$ and all $t>0$.

For a metric transform $\phi$, define its relative-scale function
\[
 \omega_\phi(t):=\sup_{r>0}\frac{\phi(tr)}{\phi(r)},
 \qquad t>0.
\]

\begin{lemma}[Automatic control for metric transforms]\label{lem:omega-automatic}
For every metric transform $\phi$,
\[
 \omega_\phi(t)\le
 \begin{cases}
  1,&0<t\le1,\\
  \lceil t\rceil,&t>1.
 \end{cases}
\]
In particular, $\omega_\phi(t)<\infty$ for every $t>0$, and
\[
 \phi(2r)\le2\phi(r)
 \qquad(r\ge0).
\]
\end{lemma}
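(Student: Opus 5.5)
The proof rests on two facts: monotonicity of $\phi$, and subadditivity iterated along an integer chain.

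For $0<t\le1$ and any $r>0$, we have $tr\le r$. Since $\phi$ is increasing, $\phi(tr)\le\phi(r)$. Because $\phi$ is a homeomorphism with $\phi(0)=0$, it follows that $\phi(r)>0$ for $r>0$, so the quotient is defined and bounded by $1$. Taking the supremum gives $\omega_\phi(t)\le1$.

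For $t>1$, put $n=\lceil t\rceil$, so that $tr\le nr$. First we show $\phi(nr)\le n\phi(r)$ by induction on $n$. The case $n=1$ is trivial. For the inductive step, subadditivity gives
\[
 \phi((k+1)r)\le\phi(kr)+\phi(r)\le(k+1)\phi(r).
\]
Monotonicity then gives $\phi(tr)\le\phi(nr)\le n\phi(r)$. Dividing by $\phi(r)>0$ and taking the supremum over $r$ yields $\omega_\phi(t)\le\lceil t\rceil$. Finiteness of $\omega_\phi(t)$ for every $t>0$ follows immediately from the two cases.

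The doubling inequality $\phi(2r)\le2\phi(r)$ is the case $n=2$ of the induction, or equivalently subadditivity with $s=t=r$. At $r=0$ both sides vanish, so the inequality also holds there.

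There is no genuine obstacle. The only point needing care is that $\phi(r)>0$ for $r>0$, so that the quotient defining $\omega_\phi$ makes sense; this follows from injectivity of $\phi$ together with $\phi(0)=0$.
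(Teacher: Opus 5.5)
Your proof is correct and follows the paper's argument exactly: monotonicity for $0<t\le1$, and iterated subadditivity plus monotonicity, $\phi(tr)\le\phi(\lceil t\rceil r)\le\lceil t\rceil\phi(r)$, for $t>1$. The extra checks you include, namely $\phi(r)>0$ for $r>0$ and the case $r=0$ of the doubling inequality, are points the paper leaves implicit.
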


\begin{proof}
If $0<t\le1$, monotonicity gives $\phi(tr)\le\phi(r)$.  If $t>1$ and $n=\lceil t\rceil$, then monotonicity and repeated subadditivity give
\[
 \phi(tr)\le\phi(nr)\le n\phi(r).
\]
Taking the supremum over $r>0$ proves the claim.
\end{proof}

\begin{proposition}[Relative-scale criterion]\label{prop:relative-scale}
Let $\phi$ be a metric transform.  The following are equivalent.
\begin{enumerate}[label=(\roman*)]
\item
\[
 \lim_{t\downarrow0}\omega_\phi(t)=0.
\]
\item There is an increasing homeomorphism $\eta:[0,\infty)\to[0,\infty)$ such that, for every metric space $(Z,d)$, the identity
\[
 \id:(Z,d)\longrightarrow(Z,\phi\circ d)
\]
is $\eta$--quasisymmetric.
\item There is an increasing homeomorphism $\eta:[0,\infty)\to[0,\infty)$ such that the identity is $\eta$--quasisymmetric for every three-point metric space.
\end{enumerate}
Moreover, in (ii) the distortion function may be chosen using only $\omega_\phi$.
\end{proposition}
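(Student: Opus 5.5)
I will prove (i)$\Rightarrow$(ii)$\Rightarrow$(iii)$\Rightarrow$(i); the implication (ii)$\Rightarrow$(iii) is immediate.

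\emph{Step (i)$\Rightarrow$(ii).} Fix a metric space $(Z,d)$ and distinct $x,a,b$ with $d(x,a)\le t\,d(x,b)$. Write $r=d(x,b)>0$. By monotonicity of $\phi$,
\[
 \frac{\phi(d(x,a))}{\phi(d(x,b))}\le\frac{\phi(tr)}{\phi(r)}\le\omega_\phi(t).
\]
So it suffices to find an increasing homeomorphism $\eta\ge\omega_\phi$ built from $\omega_\phi$ alone. The function $\omega_\phi$ is nondecreasing in $t$, since each ratio $\phi(tr)/\phi(r)$ is. It is finite by Lemma~\ref{lem:omega-automatic}, it satisfies $\omega_\phi(t)\le\lceil t\rceil\le t+1$, and it tends to $0$ as $t\downarrow0$ by (i).

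I would take the upper-regularized envelope
\[
 \eta(t):=t+\frac1t\int_t^{2t}\omega_\phi(s)\,ds,
 \qquad \eta(0)=0.
\]
The integral term is at least $\omega_\phi(t)$ by monotonicity. It is continuous on $(0,\infty)$. It is nondecreasing, because it is an average of a nondecreasing function over $[t,2t]$, which equals $\int_1^2\omega_\phi(tu)\,du$. It tends to $0$ at $0$, since it is at most $\omega_\phi(2t)$. Adding $t$ makes $\eta$ strictly increasing and unbounded. Hence $\eta$ is an increasing homeomorphism that depends only on $\omega_\phi$, which also gives the final sentence of the proposition.

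\emph{Step (iii)$\Rightarrow$(i).} I argue by contradiction. Suppose $\omega_\phi(t)\not\to0$. Then there are $\delta>0$ and $t_k\downarrow0$ with $\omega_\phi(t_k)>\delta$, so there are $r_k>0$ with $\phi(t_kr_k)>\delta\phi(r_k)$. For each $k$ I build a three-point metric space $\{x,a,b\}$ with
\[
 d(x,a)=t_kr_k,\qquad d(x,b)=r_k,\qquad d(a,b)=r_k.
\]
This is a valid metric once $t_k\le2$; it is an isosceles triangle. The quasisymmetry condition at ratio $t_k$ then forces
\[
 \phi(t_kr_k)\le\eta(t_k)\phi(r_k),
\]
so $\eta(t_k)>\delta$ for every $k$. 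This contradicts $\eta(t_k)\to\eta(0)=0$, since $\eta$ is a homeomorphism.

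The main subtlety is the regularization step. One has to check that the averaged $\eta$ really dominates $\omega_\phi$ pointwise and is continuous at $0$. Both facts rely only on the monotonicity of $\omega_\phi$ and on (i).
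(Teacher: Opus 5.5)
Your proposal is correct and follows the paper's proof essentially verbatim: you use the same ratio bound by $\omega_\phi$, the same regularized majorant $\eta(t)=t+\int_1^2\omega_\phi(tu)\,du$, and the same three-point test space for (iii)$\Rightarrow$(i). The only cosmetic difference is that you argue (iii)$\Rightarrow$(i) by contradiction with $d(a,b)=r_k$, which is valid since $t_k\le2$, whereas the paper takes $d(a,b)=\max\{tr,r\}$ for arbitrary $t>0$ and obtains $\omega_\phi\le\eta$ directly.
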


\begin{proof}
Assume (i).  Let $x,a,b\in Z$ be distinct and suppose $d(x,a)\le t\,d(x,b)$.  Monotonicity gives
\[
 \frac{\phi(d(x,a))}{\phi(d(x,b))}
 \le\frac{\phi(t\,d(x,b))}{\phi(d(x,b))}
 \le\omega_\phi(t).
\]
The function $\omega_\phi$ is finite by Lemma~\ref{lem:omega-automatic} and is nondecreasing.  Hence it is Borel measurable, locally bounded, and locally integrable.  A concrete increasing homeomorphism majorant is obtained by setting
\[
 \eta(0)=0,
 \qquad
 \eta(t)=t+\int_1^2\omega_\phi(tu)\,du
 \quad(t>0).
\]
The integral term is finite and nondecreasing.  If $W(t)=\int_0^t\omega_\phi(v)\,dv$, then it equals
\[
 \frac{W(2t)-W(t)}{t},
\]
which is continuous for $t>0$ because $W$ is continuous.  Moreover,
\[
 \omega_\phi(t)\le\int_1^2\omega_\phi(tu)\,du\le\omega_\phi(2t)
\]
shows that it majorizes $\omega_\phi(t)$ and tends to $0$ as $t\downarrow0$.  The added term $t$ makes $\eta$ strictly increasing and unbounded.  Thus $\eta$ is an increasing homeomorphism of $[0,\infty)$, and the preceding ratio estimate proves (ii).  The implication (ii)$\Rightarrow$(iii) is immediate.

Assume (iii).  Fix $r>0$ and $t>0$.  On the three-point set $\{x,a,b\}$ define
\[
 d(x,a)=tr,\qquad d(x,b)=r,\qquad d(a,b)=\max\{tr,r\}.
\]
This is a metric.  Quasisymmetry gives
\[
 \frac{\phi(tr)}{\phi(r)}\le\eta(t).
\]
Taking the supremum over $r>0$ yields $\omega_\phi(t)\le\eta(t)$.  Since $\eta(t)\to0$ as $t\downarrow0$, condition (i) follows.
\end{proof}

\begin{corollary}[Power relative control]\label{cor:power-transform}
Let $\phi$ be a metric transform.  Suppose that for some $C\ge1$ and $\alpha>0$,
\[
 \frac{\phi(tr)}{\phi(r)}\le C t^\alpha
 \qquad(0<t\le1,\ r>0).
\]
Then $\id:(Z,d)\to(Z,\phi\circ d)$ is quasisymmetric for every metric space $Z$, with a distortion function independent of $Z$.
\end{corollary}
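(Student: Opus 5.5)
The plan is to reduce to Proposition~\ref{prop:relative-scale}, specifically the implication (i)$\Rightarrow$(ii), whose proof builds the distortion function $\eta$ from $\omega_\phi$ alone. So it suffices to show that the hypothesis forces $\lim_{t\downarrow0}\omega_\phi(t)=0$.

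For $0<t\le1$, the hypothesis holds uniformly in $r$. Taking the supremum over $r>0$ therefore gives
\[
 \omega_\phi(t)\le Ct^\alpha\qquad(0<t\le1).
\]
Since $\alpha>0$, the right side tends to $0$ as $t\downarrow0$, which is condition (i). Proposition~\ref{prop:relative-scale} then yields an increasing homeomorphism $\eta$, depending only on $\omega_\phi$, such that $\id:(Z,d)\to(Z,\phi\circ d)$ is $\eta$--quasisymmetric for every metric space $(Z,d)$. The bound depends only on $\phi$ and not on $Z$, which gives the uniformity claim.

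If an explicit distortion function is wanted, Lemma~\ref{lem:omega-automatic} supplies $\omega_\phi(t)\le\lceil t\rceil$ for $t>1$. Combining this with the power bound gives the majorant
\[
 \eta(t)=\max\{Ct^\alpha,\,t+1\}\quad (t>0),\qquad \eta(0)=0.
\]
This $\eta$ can be adjusted near $t=1$ so that it is a continuous, strictly increasing, unbounded homeomorphism; for instance, take $Ct^\alpha$ on $(0,1]$ and $C(t+1)$ beyond, or simply use the integral construction from the proof of Proposition~\ref{prop:relative-scale}.

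There is no real obstacle. The only point needing care is that the ratio estimate covers all triples: it must handle both $t\le1$ and $t>1$, and the case $t>1$ is already covered by Lemma~\ref{lem:omega-automatic}.
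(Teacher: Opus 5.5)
Your argument is essentially the paper's: taking the supremum over $r$ gives $\omega_\phi(t)\le Ct^\alpha$ on $(0,1]$, so Proposition~\ref{prop:relative-scale}, (i)$\Rightarrow$(ii), applies, and Lemma~\ref{lem:omega-automatic} handles $t>1$. Your optional explicit formula is flawed, though the proof does not need it: $\max\{Ct^\alpha,t+1\}\ge1$ for all $t>0$, so it is not continuous at $0$, and splicing $Ct^\alpha$ with $C(t+1)$ jumps from $C$ to $2C$ at $t=1$; a correct explicit choice is $\eta(t)=Ct^\alpha+2t$, since $2t>t+1>\lceil t\rceil$ for $t>1$.
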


\begin{proof}
The displayed estimate gives $\omega_\phi(t)\le Ct^\alpha$ for $0<t\le1$, so Proposition~\ref{prop:relative-scale} applies.  No additional assumption for $t\ge1$ is needed by Lemma~\ref{lem:omega-automatic}.
\end{proof}

\begin{proposition}[A concave transform without relative decay]\label{prop:doubling-counterexample}
There exists an increasing concave homeomorphism $\phi\colon[0,\infty)\to[0,\infty)$ with $\phi(0)=0$ such that, for every fixed $\tau\in(0,1)$,
\[
 \lim_{r\downarrow0}\frac{\phi(\tau r)}{\phi(r)}=1.
\]
The function $\phi$ is a metric transform and satisfies $\phi(2t)\le2\phi(t)$, but the identity
\[
 ([0,e^{-2}],|\cdot|)\longrightarrow([0,e^{-2}],\phi\circ|\cdot|)
\]
is not quasisymmetric.
\end{proposition}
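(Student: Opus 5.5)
The natural candidate is $\phi(t)=1/\log(1/t)$ near $0$, i.e.\ $\phi(t)=-1/\log t$ for $0<t\le e^{-2}$. I would extend it linearly beyond $e^{-2}$ with slope equal to the left derivative there, and set $\phi(0)=0$.

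\textbf{Shape of $\phi$.} On $(0,e^{-2}]$ I will compute
\[
\phi'(t)=\frac{1}{t(\log t)^2},\qquad
\phi''(t)=-\frac{(\log t)^2+2\log t}{t^2(\log t)^4}.
\]
The numerator $(\log t)^2+2\log t=\log t\,(\log t+2)$ is $\ge0$ when $\log t\le-2$, so $\phi''\le0$ there. This is exactly why the cutoff is $e^{-2}$. The linear extension with matching slope keeps $\phi$ concave, increasing, continuous, and unbounded, so $\phi$ is an increasing concave homeomorphism with $\phi(0)=0$. By the remark after the definition of metric transform it is subadditive, and Lemma~\ref{lem:omega-automatic} then gives $\phi(2t)\le2\phi(t)$.

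\textbf{No relative decay.} For fixed $\tau\in(0,1)$ and small $r$,
\[
\frac{\phi(\tau r)}{\phi(r)}=\frac{\log(1/r)}{\log(1/r)+\log(1/\tau)}\longrightarrow1
\qquad (r\downarrow0).
\]

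\textbf{Failure of quasisymmetry.} I would argue directly rather than through Proposition~\ref{prop:relative-scale}, since that proposition concerns all spaces while here the space is fixed. Suppose the identity were $\eta$--quasisymmetric. Fix small $\tau$ and take the triple $x=0$, $a=\tau r$, $b=r$ in $[0,e^{-2}]$. The quasisymmetry inequality with $t=\tau$ gives
\[
\frac{\phi(\tau r)}{\phi(r)}\le\eta(\tau).
\]
Letting $r\downarrow0$ gives $1\le\eta(\tau)$ for every $\tau\in(0,1)$. This contradicts $\eta(\tau)\to\eta(0)=0$.

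The only real checking is the concavity sign near $e^{-2}$ together with the gluing, and the computation above settles it.
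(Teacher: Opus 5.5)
Your proof is correct and follows the paper's argument: the paper takes $\phi(t)=1/\log(e/t)$ on $(0,e^{-2}]$, so that $\log(e/t)\ge3$ and $\phi''<0$ with room to spare, whereas your $-1/\log t$ has its inflection exactly at $e^{-2}$. Apart from that, the tangent-line extension and the contradiction from the triple $0,\tau r,r$ are the same; the paper also notes that concavity survives setting $\phi(0)=0$ by passing to the limit, which you leave implicit but which is routine.
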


\begin{proof}
Set $R=e^{-2}$ and define
\[
 \phi(t)=
 \begin{cases}
 0,&t=0,\\[1mm]
 \displaystyle\frac{1}{\log(e/t)},&0<t\le R,\\[3mm]
 \displaystyle\frac13+\frac{e^2}{9}(t-R),&t\ge R.
 \end{cases}
\]
For $0<t\le R$, writing $L=\log(e/t)$ gives
\[
 \phi'(t)=\frac{1}{tL^2},
 \qquad
 \phi''(t)=\frac{2-L}{t^2L^3}<0,
\]
because $L\ge3$.  Moreover, $\phi(t)\to0$ as $t\downarrow0$.  A concave function on $(0,R]$ with a finite right limit remains concave after assigning that limit at $0$: apply the concavity inequality with one endpoint $x_n\downarrow0$ and pass to the limit.  Thus the displayed definition is concave on $[0,R]$.  At $t=R$, the value and derivative agree with the linear extension; the derivative is decreasing on $(0,R]$ and then constant.  Hence the extension is concave on all of $[0,\infty)$.  It is also increasing, continuous, and unbounded, and is therefore a metric transform.  The inequality $\phi(2t)\le2\phi(t)$ also follows from Lemma~\ref{lem:omega-automatic}.

Fix $\tau\in(0,1)$.  For $r\downarrow0$,
\[
 \frac{\phi(\tau r)}{\phi(r)}
 =\frac{\log(e/r)}{\log(e/r)+\log(1/\tau)}
 \longrightarrow1.
\]
If the identity on $[0,R]$ were $\eta$--quasisymmetric, choose $\tau$ so small that $\eta(\tau)<1/2$.  Applying quasisymmetry to $x=0$, $a=\tau r$, and $b=r$ would give
\[
 \frac{\phi(\tau r)}{\phi(r)}\le\eta(\tau)<\frac12,
\]
contradicting the preceding limit for sufficiently small $r$.
\end{proof}

%=============================================================================
\section{Rooted \texorpdfstring{$q$}{q}--ary trees: quasisymmetry, doubling, and dimension}\label{sec:regular}
%=============================================================================

Let $T_q$ be the rooted $q$--ary tree, where $q\ge2$.  Its boundary can be identified with the sequence space
\[
 \Sigma_q=\{1,\ldots,q\}^{\N}.
\]
For distinct $\xi,\eta\in\Sigma_q$, let $\ell(\xi,\eta)$ be the number of common initial symbols.  Fix $a,\epsilon>0$ and define
\[
 d_a(\xi,\eta)=e^{-a\ell(\xi,\eta)},
 \qquad
 d_\kappa(\xi,\eta)=e^{-\epsilon\rhok(\ell(\xi,\eta))}.
\]
Both are compact ultrametrics.

\begin{theorem}[Renormalized metrics on rooted $q$--ary trees]\label{thm:regular-dichotomy}
Let $K_\infty=\lim_{t\to\infty}\kappa(t)\in[1,\infty]$.
\begin{enumerate}[label=(\roman*)]
\item $(\Sigma_q,d_\kappa)$ is uniformly perfect for every $\kappa$, with $S=e^\epsilon$.
\item The identity $\id:(\Sigma_q,d_a)\to(\Sigma_q,d_\kappa)$ is quasisymmetric if and only if $K_\infty<\infty$.
\item The metric space $(\Sigma_q,d_\kappa)$ is doubling if and only if $K_\infty<\infty$.
\item Its Hausdorff dimension is
\[
 \Hdim(\Sigma_q,d_\kappa)
 =\begin{cases}
 \displaystyle\frac{K_\infty\log q}{\epsilon},&K_\infty<\infty,\\[3mm]
 \infty,&K_\infty=\infty.
 \end{cases}
\]
\end{enumerate}
\end{theorem}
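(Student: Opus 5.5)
Each part reduces to how $\rhok$ behaves at integer depths; the sets are the same and only the scale function changes.

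For (i), we invoke Corollary~\ref{cor:regular-UP} with $N_0=0$. The rooted $q$--ary tree branches at every depth from the root, so $B_o(\xi)=\{0,1,2,\ldots\}$ and the corollary gives $S=e^\epsilon$, $r_0=1$.

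For (ii), note that $d_\kappa=\phi\circ d_a$ only in the sense $d_\kappa=\psi(\ell)$ with $\ell=-a^{-1}\log d_a$, so Proposition~\ref{prop:relative-scale} is not directly applicable. We argue through the ultrametric structure instead.

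\emph{Bounded case.} If $\kappa\le K_\infty$, Lemma~\ref{lem:rho-increments}(iii) gives
\[
 \frac{\ell_1-\ell_2}{K_\infty}\le\rhok(\ell_1)-\rhok(\ell_2)\le\ell_1-\ell_2
 \qquad(\ell_1\ge\ell_2).
\]
Suppose $d_a(x,y)\le t\,d_a(x,z)$, that is, $\ell(x,y)-\ell(x,z)\ge -a^{-1}\log t$. Then
\[
 \frac{d_\kappa(x,y)}{d_\kappa(x,z)}=\exp\bigl(-\epsilon(\rhok(\ell(x,y))-\rhok(\ell(x,z)))\bigr).
\]
This ratio is at most $t^{\epsilon/(aK_\infty)}$ when $t\le1$. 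When $t\ge1$ it is at most $t^{\epsilon/a}$, using the upper bound above. So $\eta(t)=\max\{t^{\epsilon/(aK_\infty)},t^{\epsilon/a}\}$ works.

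\emph{Unbounded case.} Take points with $\ell(x,z)=n$ and $\ell(x,y)=n+L$ for a fixed $L$. Then $d_a(x,y)/d_a(x,z)=e^{-aL}$, while by Lemma~\ref{lem:rho-increments}(ii)
\[
 \frac{d_\kappa(x,y)}{d_\kappa(x,z)}=e^{-\epsilon(\rhok(n+L)-\rhok(n))}\to1 \qquad (n\to\infty).
\]
This forces $\eta(e^{-aL})\ge1$ for every $L$, which contradicts $\eta(0^+)=0$.

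For (iii), balls in the ultrametric are cylinders: the closed ball of radius $e^{-\epsilon\rhok(n)}$ is a depth-$n$ cylinder. If $\kappa\le K_\infty$, the ball of radius $r$ is a depth-$n$ cylinder, and the ball of radius $r/2$ contains a cylinder of depth at most $n+m$, where $m$ depends only on $\epsilon$ and $K_\infty$ via the lower increment bound. Hence at most $q^m$ balls of half radius cover it.

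Conversely, if $\kappa$ is unbounded, fix $L$ with $q^L$ exceeding any proposed doubling constant. Choose $n$ so large that $\rhok(n+L)-\rhok(n)<\epsilon^{-1}\log 2$. The depth-$n$ cylinder, of radius $r=e^{-\epsilon\rhok(n)}$, then contains $q^L$ points pairwise at distance greater than $r/2$. Covering it by balls of radius $r/4$ needs at least $q^L$ of them, since each such ball has diameter at most $r/4$. This contradicts doubling.

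For (iv), we use the natural cylinder covers and the uniform measure $\mu$, which gives each depth-$n$ cylinder mass $q^{-n}$.

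\emph{Upper bound.} Covering by the $q^n$ cylinders of diameter $e^{-\epsilon\rhok(n)}$ gives
\[
 \mathcal H^s\lesssim\liminf_n q^n e^{-s\epsilon\rhok(n)}.
\]
By Lemma~\ref{lem:rho-average}, $\rhok(n)/n\to K_\infty^{-1}$. So for $s>K_\infty\log q/\epsilon$ the exponent $n\log q-s\epsilon\rhok(n)$ tends to $-\infty$, and $\Hdim\le K_\infty\log q/\epsilon$.

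\emph{Lower bound.} Apply the mass distribution principle. Any set of small diameter lies in a cylinder of depth $n$ whose diameter is comparable to the set's; ultrametricity gives this with constant $1$ up to the one-step increment, which is at most $e^{\epsilon}$ by Lemma~\ref{lem:rho-increments}(i). So
\[
 \mu(B)\le q^{-n}\lesssim (\diam B)^s \quad\text{whenever}\quad n\log q\ge s\epsilon\rhok(n)+O(1).
\]
This holds eventually for every $s<K_\infty\log q/\epsilon$, again by Lemma~\ref{lem:rho-average}. When $K_\infty=\infty$ the ratio $\rhok(n)/n\to0$, so every $s$ works and the dimension is infinite.

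The main obstacle is the lower bound in (iv). Lemma~\ref{lem:rho-average} controls $\rhok(n)/n$ only asymptotically, so the constant in $\mu(B)\lesssim(\diam B)^s$ must be made uniform. I will handle this by restricting to small diameters, where $\rhok(n)\le n(K_\infty^{-1}+\delta)$. Some care is also needed so that the covers in the upper bound have diameters tending to zero; they do, since $\rhok$ is unbounded.
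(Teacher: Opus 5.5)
Your proposal is correct and follows the paper's proof in all four parts. It uses Corollary~\ref{cor:regular-UP} for (i), the same power-type distortion and the same $\ell=n$ versus $\ell=n+L$ test triples for (ii), and cylinder counting via Lemma~\ref{lem:rho-increments} for (iii). For (iv) it uses cylinder covers plus the mass distribution principle, evaluated through Lemma~\ref{lem:rho-average}.

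One slip in the unbounded case of (iii) is easily fixed. Covering by balls of radius $r/4$ only shows that $q^L$ is at most the \emph{square} of a doubling constant, so you must choose $q^L$ larger than that square. Alternatively, use radius-$r/2$ balls directly, as the paper does: each contains at most one of your $q^L$ points, since ultrametric balls of radius $r/2$ have diameter at most $r/2$ while those points are pairwise more than $r/2$ apart.
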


\begin{proof}
Part (i) is Corollary~\ref{cor:regular-UP}.

Assume first that $K_\infty<\infty$ and put $M=K_\infty$.  Then $1\le\kappa\le M$.  Let $x,u,v\in\Sigma_q$ be distinct and write
\[
 p=\ell(x,u),\qquad q_0=\ell(x,v).
\]
Suppose $d_a(x,u)\le t\,d_a(x,v)$.
If $0<t\le1$, then
\[
 p-q_0\ge\frac1a\log\frac1t.
\]
Lemma~\ref{lem:rho-increments}(iii) gives
\[
 \frac{d_\kappa(x,u)}{d_\kappa(x,v)}
 =e^{-\epsilon(\rhok(p)-\rhok(q_0))}
 \le e^{-\epsilon(p-q_0)/M}
 \le t^{\epsilon/(aM)}.
\]
If $t\ge1$ and $p<q_0$, then
\[
 q_0-p\le\frac1a\log t
\]
and, using $\rhok(q_0)-\rhok(p)\le q_0-p$,
\[
 \frac{d_\kappa(x,u)}{d_\kappa(x,v)}
 \le t^{\epsilon/a}.
\]
If $p\ge q_0$, the same ratio is at most $1$.  Thus the identity is quasisymmetric with distortion
\[
 \eta(t)=
 \begin{cases}
 t^{\epsilon/(aM)},&0\le t\le1,\\
 t^{\epsilon/a},&t\ge1.
 \end{cases}
\]

Now assume $K_\infty=\infty$ and suppose, for contradiction, that the identity is $\eta$--quasisymmetric.  Choose an integer $L\ge1$ so large that
\[
 \eta(e^{-aL})<\frac12.
\]
For every $n$ one can choose $x,u_n,v_n\in\Sigma_q$ with
\[
 \ell(x,u_n)=n+L,
 \qquad
 \ell(x,v_n)=n.
\]
Then
\[
 \frac{d_a(x,u_n)}{d_a(x,v_n)}=e^{-aL},
\]
whereas Lemma~\ref{lem:rho-increments}(ii) gives
\[
 \frac{d_\kappa(x,u_n)}{d_\kappa(x,v_n)}
 =e^{-\epsilon(\rhok(n+L)-\rhok(n))}\longrightarrow1.
\]
This contradicts quasisymmetry.  Part (ii) follows.

For (iii), first suppose $\kappa\le M$.  Choose $L\in\N$ so that
\[
 \frac{\epsilon L}{M}\ge\log2.
\]
Put $r_n=e^{-\epsilon\rhok(n)}$.  A depth-$n$ cylinder is exactly a closed ball of radius $r_n$ and has diameter $r_n$.  If $0<R<1$ and $r_n\le R<r_{n-1}$, then every closed ball of radius $R$ is a depth-$n$ cylinder.  Such a cylinder is the disjoint union of $q^L$ depth-$(n+L)$ cylinders, each of diameter at most $r_n/2\le R/2$ by Lemma~\ref{lem:rho-increments}(iii).  If $R\ge1$, the radius-$R$ ball is the whole depth-$0$ cylinder, and its depth-$L$ subcylinders likewise have diameter at most $1/2\le R/2$.  Hence every radius-$R$ ball is covered by at most $q^L$ radius-$R/2$ balls, so $q^L$ is a uniform doubling bound.

Conversely, suppose $\kappa$ is unbounded.  Given $L\ge1$, choose $n$ so large that
\[
 \frac{\epsilon L}{\kappa(n)}<\log2.
\]
Then Lemma~\ref{lem:rho-increments} gives
\[
 r_{n+L}=e^{-\epsilon\rhok(n+L)}>
 \frac12e^{-\epsilon\rhok(n)}=\frac{r_n}{2}.
\]
A ball of radius $r_n/2$ that meets a depth-$n$ cylinder is contained in that cylinder, and it cannot meet two distinct depth-$(n+L)$ subcylinders: two such points are at distance at least $r_{n+L}>r_n/2$.  Thus covering one depth-$n$ cylinder by radius-$r_n/2$ balls requires at least $q^L$ balls.  Since $L$ is arbitrary, no doubling constant exists.

For (iv), retain the notation $r_n=e^{-\epsilon\rhok(n)}$.  There are $q^n$ cylinders of depth $n$, each of diameter $r_n$.  Let $\mu$ be the uniform Bernoulli probability measure, so every depth-$n$ cylinder has measure $q^{-n}$.  We claim that
\[
 \Hdim(\Sigma_q,d_\kappa)
 =\liminf_{n\to\infty}\frac{n\log q}{\epsilon\rhok(n)}.
\]
If $s$ is larger than the liminf, choose a subsequence on which the quotient is bounded above by some $s'<s$.  Along that subsequence the depth-$n$ cylinder cover has total $s$--content
\[
 q^n r_n^s
 =\exp\!\bigl(n\log q-s\epsilon\rhok(n)\bigr)
 \le e^{-(s-s')\epsilon\rhok(n)}\longrightarrow0.
\]
This proves the upper bound.

If $s$ is smaller than the liminf, then $q^{-n}\le r_n^s$ for all sufficiently large $n$.  Given a sufficiently small radius $r$, choose the unique $n$ with $r_n\le r<r_{n-1}$.  By the closed-ball convention, $B(\xi,r)$ is exactly the depth-$n$ cylinder through $\xi$.  Therefore
\[
 \mu(B(\xi,r))=q^{-n}\le r_n^s\le r^s.
\]
The mass distribution principle gives the lower bound.  Finally, Lemma~\ref{lem:rho-average} evaluates the displayed liminf as $K_\infty\log q/\epsilon$ when $K_\infty<\infty$ and as $\infty$ otherwise.
\end{proof}

%=============================================================================
\section{Lower Assouad dimension}\label{sec:dimension}
%=============================================================================

For background on Assouad-type dimensions, see \cite{Luukkainen98,KLV13}.
The global form of Proposition~\ref{prop:lower-assouad} is contained in
\cite[Lem.~2.1]{KLV13}; we include the proof because our hypothesis is stated
only below a fixed cutoff.

\begin{definition}
For a metric space $(Z,d)$, let $N_r(B(z,R))$ be the least number of radius-$r$ balls needed to cover $B(z,R)$.  The lower Assouad dimension is
\[
\ldimA(Z,d)=\sup\left\{s\ge0:\begin{array}{l}
\text{there is }c>0\text{ such that }N_r(B(z,R))\ge c(R/r)^s\\
\text{for every }z\in Z\text{ and }0<r<R<\diam Z
\end{array}\right\}.
\]
\end{definition}

\begin{proposition}[Local uniform perfectness and lower dimension]\label{prop:lower-assouad}
Let $(Z,d)$ be a bounded metric space with at least two points.  Suppose that Definition~\ref{def:UP} holds with constants $S>1$ and $r_0>0$.  Then
\[
 \ldimA(Z,d)\ge\frac{\log2}{\log(2S+1)}.
\]
\end{proposition}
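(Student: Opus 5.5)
The plan is to build, inside any ball $B(z,R)$, a dyadic tree of $2^k$ points that are well separated at scale $R/(2S+1)^k$. Each point of that scale then forces a separate covering ball.

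\emph{Splitting step.} Put $\lambda=2S+1$ and suppose $0<R\le r_0$. Let $\rho=R/\lambda$. Apply Definition~\ref{def:UP} at the center $z$ and radius
\[
 2S\rho=\tfrac{2S}{2S+1}R\le r_0 .
\]
This gives $w$ with $2\rho<\dist(z,w)\le 2S\rho$. The closed balls $B(z,\rho)$ and $B(w,\rho)$ are then disjoint. Both lie in $B(z,R)$, since
\[
 \dist(z,w)+\rho\le(2S+1)\rho=R .
\]

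\emph{Iteration.} Apply the splitting step again to each new center with the smaller radius $\rho$. The same constants are legitimate at every center, because uniform perfectness is uniform in the center. Iterating $k$ times, provided $R\le r_0$, produces $2^k$ centers with three properties.
\begin{itemize}
\item[(a)] Each center lies in $B(z,R)$.
\item[(b)] The closed balls of radius $R/\lambda^k$ about the centers are pairwise disjoint.
\item[(c)] Distinct centers are at distance greater than $2R/\lambda^k$.
\end{itemize}
Property (c) comes from the first generation at which two centers were separated: they sit in disjoint nested balls of radius $R/\lambda^j$ with $j\le k$. Now a ball of radius $r\le R/\lambda^k$ contains at most one center, by (c). Hence
\[
 N_r(B(z,R))\ge 2^k .
\]
Choose $k$ maximal with $R/\lambda^k\ge r$. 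Then $\lambda^{k+1}>R/r$, and therefore
\[
 2^k\ge \tfrac12 (R/r)^{s},\qquad s=\frac{\log2}{\log\lambda}.
\]

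\emph{Large radii.} For $r_0<R<\diam Z$, the bound follows from monotonicity. If $r<r_0$, we have
\[
 N_r(B(z,R))\ge N_r(B(z,r_0))\ge\tfrac12(r_0/r)^s\ge\tfrac12\bigl(r_0/\diam Z\bigr)^s(R/r)^s .
\]
Boundedness of $Z$ makes the last constant positive. If $r\ge r_0$, then $R/r\le\diam Z/r_0$, and the trivial bound $N\ge1$ suffices after shrinking $c$.

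I expect the main obstacle to be purely bookkeeping. The radius at which Definition~\ref{def:UP} is invoked must stay at most $r_0$, and the separations must be strict, so that closed balls (the paper's convention) are disjoint and no single radius-$r$ ball can capture two centers. Both points are handled by invoking perfectness at radius $2S\rho$ rather than $R$.
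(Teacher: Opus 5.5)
Your argument is the paper's: the same ratio $\lambda=2S+1$, the same appeal to Definition~\ref{def:UP} at radius $2SR/\lambda\le r_0$, the same binary iteration, the same counting with $k$ maximal, and essentially the same reduction for $R>r_0$. The one step whose justification fails is property (c). Take two level-$k$ centers $x,y$ whose first differing ancestors are the level-$j$ centers $a,b$. Saying they ``sit in disjoint nested balls'' $B(a,R/\lambda^j)$ and $B(b,R/\lambda^j)$ gives no quantitative separation in a general metric space. From $d(a,b)>2R/\lambda^j$ and $d(x,a),d(y,b)\le R/\lambda^j$, the triangle inequality yields only $d(x,y)>0$. Likewise, the disjointness in (b) of the radius-$R/\lambda^k$ balls about $x$ and $y$ gives only $d(x,y)>R/\lambda^k$, since there is no geodesic structure to double this. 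Your counting step needs the bound $d(x,y)>2R/\lambda^k$, so that a radius-$r$ ball with $r\le R/\lambda^k$ contains at most one center, and that bound is not yet proved.

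The missing ingredient is already in your splitting step. When a center with radius $R'$ splits, the new center lies within $2SR'/\lambda=R'-R'/\lambda$ of its parent, and the old center does not move. Telescoping along a branch from level $j$ to level $k$ gives $d(x,a)\le R/\lambda^j-R/\lambda^k$, and similarly for $y$ and $b$. Hence
\[
 d(x,y)>\frac{2R}{\lambda^j}-2\Bigl(\frac{R}{\lambda^j}-\frac{R}{\lambda^k}\Bigr)=\frac{2R}{\lambda^k}.
\]
This is exactly the paper's displacement estimate $\sum_{i=j+1}^k 2SR/A^i=(R/A^j)(1-A^{-(k-j)})$, which uses $A-1=2S$. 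Alternatively, you can keep only the separation $d(x,y)>R/\lambda^k$ that (b) does give, and count with radii $r\le R/(2\lambda^k)$. This yields the same exponent $s=\log2/\log(2S+1)$, with the constant $\tfrac12$ replaced by $2^{-1-s}$. With either repair, the rest of your proposal is correct.
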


\begin{proof}
Put $A=2S+1$ and $s=\log2/\log A$.  If $0<R\le r_0$ and $z\in Z$, apply Definition~\ref{def:UP} at the radius $2SR/A$ to find $w$ such that
\[
 \frac{2R}{A}<d(z,w)\le\frac{2SR}{A}.
\]
The two balls $B(z,R/A)$ and $B(w,R/A)$ are contained in $B(z,R)$, and their centers are more than $2R/A$ apart.  Repeat the same construction in every child ball.  At a transition from level $j-1$ to level $j$, a new center moves by at most $2SR/A^j$.  If two level-$k$ centers first separate at level $j$, their level-$j$ ancestors are more than $2R/A^j$ apart, whereas the total subsequent displacement of each branch is at most
\[
 \sum_{i=j+1}^{k}\frac{2SR}{A^i}
 =\frac{R}{A^j}\bigl(1-A^{-(k-j)}\bigr),
\]
because $A-1=2S$.  The two level-$k$ centers are therefore more than $2R/A^k$ apart.  Thus the iteration yields $2^k$ such centers in $B(z,R)$.  Hence, whenever
\[
 \frac{R}{A^{k+1}}<r\le\frac{R}{A^k},
\]
a radius-$r$ ball contains at most one of these centers, and therefore
\[
 N_r(B(z,R))\ge 2^k
 \ge \frac12\left(\frac{R}{r}\right)^s.
\]

For arbitrary $0<R<\diam Z$, use the preceding construction at
\[
 R'=\min\{R,r_0\}
\]
inside $B(z,R)$.  If $r<R'$, the preceding estimate and $R\le\diam Z$ give
\[
 N_r(B(z,R))
 \ge\frac12\left(\frac{R'}{r}\right)^s
 \ge\frac12\min\left\{1,\left(\frac{r_0}{\diam Z}\right)^s\right\}
       \left(\frac{R}{r}\right)^s.
\]
If $r\ge R'$, then necessarily $R>r_0$ and hence $R'=r_0$.  In this case $N_r(B(z,R))\ge1$ and
\[
 \frac{R}{r}\le\frac{\diam Z}{r_0}.
\]
After decreasing the multiplicative constant once more, the same lower estimate follows.  Thus $s$ is admissible in the definition of the lower Assouad dimension.
\end{proof}

\begin{corollary}\label{cor:tree-lower-assouad}
For every $q\ge2$ and every increasing concave sublinear $\kappa$,
\[
 \ldimA(\Sigma_q,d_\kappa)
 \ge\frac{\log2}{\log(2e^\epsilon+1)}>0.
\]
If $\kappa$ is unbounded, $(\Sigma_q,d_\kappa)$ is non-doubling and has infinite Hausdorff dimension.
\end{corollary}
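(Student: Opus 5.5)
The corollary has two parts. The first is an immediate combination of earlier results: Theorem~\ref{thm:regular-dichotomy}(i), which is Corollary~\ref{cor:regular-UP}, together with Proposition~\ref{prop:lower-assouad}. The second part is read off from parts (iii) and (iv) of Theorem~\ref{thm:regular-dichotomy}. I will check the hypotheses of Proposition~\ref{prop:lower-assouad} first and then substitute the constants.

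\emph{Step 1: hypotheses of Proposition~\ref{prop:lower-assouad}.}
\begin{itemize}
\item $(\Sigma_q,d_\kappa)$ is bounded, since $d_\kappa\le e^{-\epsilon\rhok(0)}=1$.
\item It has at least two points, in fact uncountably many, because $q\ge2$.
\item By Corollary~\ref{cor:regular-UP}, Definition~\ref{def:UP} holds with $S=e^\epsilon$ and $r_0=1$.
\end{itemize}
One point needs care: the metric $d_\kappa$ on $\Sigma_q$ must coincide with $d^\kappa_{o,\epsilon}$ for the rooted $q$--ary tree. This holds because $\ell(\xi,\eta)$ equals $j_o(\xi,\eta)$ when $o$ is the root. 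For the root of a rooted $q$--ary tree with $q\ge2$, every depth $n\ge0$ is a branching depth along every ray, so $N_0=0$ in Corollary~\ref{cor:regular-UP}. That corollary then gives $S=e^\epsilon$ directly.

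\emph{Step 2: substitution.} Proposition~\ref{prop:lower-assouad} now gives
\[
 \ldimA(\Sigma_q,d_\kappa)\ge\frac{\log 2}{\log(2e^\epsilon+1)}.
\]
This bound is strictly positive because $2e^\epsilon+1>1$.

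\emph{Step 3: the unbounded case.} If $\kappa$ is unbounded, then $K_\infty=\infty$, since $\kappa$ is increasing. Theorem~\ref{thm:regular-dichotomy}(iii) then says that $(\Sigma_q,d_\kappa)$ is not doubling, and Theorem~\ref{thm:regular-dichotomy}(iv) gives $\Hdim=\infty$.

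I do not expect a real obstacle. The only step needing attention is the identification of $d_\kappa$ with the tree metric of Section~\ref{sec:trees}, which is what lets the constant $S=e^\epsilon$ be used.
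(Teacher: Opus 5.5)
Your proposal is correct and follows the paper's proof exactly. The paper likewise combines the uniform perfectness in Theorem~\ref{thm:regular-dichotomy}(i), with $S=e^\epsilon$, with Proposition~\ref{prop:lower-assouad} to get the lower bound, and reads the unbounded case off Theorem~\ref{thm:regular-dichotomy}(iii)--(iv). Your check that $d_\kappa$ coincides with $d^\kappa_{o,\epsilon}$ at the root (so that $N_0=0$ in Corollary~\ref{cor:regular-UP}) is a harmless extra verification; part (i) of that theorem already records it.
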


\begin{proof}
Use Theorem~\ref{thm:regular-dichotomy}(i), Proposition~\ref{prop:lower-assouad}, and Theorem~\ref{thm:regular-dichotomy}(iii)--(iv).
\end{proof}

\subsection*{Acknowledgments}
The work of S.~Han and Q.~Liu motivated the questions considered here.
The author thanks G.~Pallier and Y.~Cornulier for discussions on sublinear
boundary structures. The author is grateful to an anonymous referee for a
careful report on an earlier version of this manuscript. The referee's comments
prompted a substantial revision and helped sharpen the scope, statements, and
exposition of the present paper. This work was supported by the National
Research Foundation of Korea (NRF), funded by the Korean government (MSIT),
grant RS-2025-00513595.

%=============================================================================
% References
%=============================================================================

\end{document}